\documentclass[reqno,11pt]{amsart}

\usepackage{amsmath,amssymb,amsthm,amsfonts,mathrsfs,latexsym,mathtools,bm,tikz}
\usepackage[abbrev]{amsrefs}
\usepackage{stmaryrd} 
\usepackage[OT1, T1]{fontenc}
\usepackage{cleveref}
\usepackage{autonum}
\usepackage[marginparwidth=0pt,margin=25truemm]{geometry}

\usepackage{ytableau} 
\usepackage{braket}
\ytableausetup{boxsize=1.4em}
\ytableausetup{centertableaux}

\theoremstyle{plain}
\newtheorem{thm}{Theorem}[section]
\newtheorem{lem}[thm]{Lemma}

\theoremstyle{definition}

\newtheorem{dfn}[thm]{Definition}
\newtheorem{rem}[thm]{Remark}

\usetikzlibrary{intersections,calc,arrows.meta,decorations,decorations.pathreplacing}

\setcounter{tocdepth}{1}

\allowdisplaybreaks[2]

\everymath{\displaystyle}

\newcommand{\dep}{\mathrm{dep}}
\newcommand{\wt}{\mathrm{wt}}
\newcommand{\inle}{\vartriangleleft}
\newcommand{\inleq}{\trianglelefteq}

\newcommand{\ZZ}{\mathbb{Z}}
\newcommand{\RR}{\mathbb{R}}

\title[A $q$-analogue of Maesaka--Seki--Watanabe's formula]%
{Maesaka--Seki--Watanabe's formula for multiple harmonic $q$-sums}
\author[Y.~TSURUTA]{YUTO TSURUTA}
\date{}
\address{Mathematical Institute, Tohoku University Sendai 980-8578 Japan}
\email{tsuruta.yuuto.q7@dc.tohoku.ac.jp}

\begin{document}
\begin{abstract}
Maesaka, Seki, and Watanabe recently discovered an equality called the MSW formula. This paper provides a $q$-analogue of MSW formula. It discusses the new proof of the duality relation for finite multiple harmonic $q$-series at primitive roots of unity via $q$-analogue of MSW formula. This paper also gives a $q$-analogue of Yamamoto's generalization of MSW formula for Schur type.

\end{abstract}
\maketitle

\section{Introduction}
A tuple of positive integers $\bm{k}=(k_{1},\ldots,k_{r})$ is called an \textit{index}. For any index $\bm{k}=(k_{1},\ldots,k_{r})$, we call $\wt(\bm{k})=k_{1}+\cdots+k_{r}$ its \textit{weight} and $\dep(\bm{k})=r$ its \textit{depth}. Then the \textit{multiple harmonic sum} $\zeta_{<N}(\bm{k})$ of non-empty index $\bm{k}$ is defined by
  $$\zeta_{<N}(\bm{k})=\sum_{0<m_{1}<\cdots<m_{r}<N}\frac{1}{m_{1}^{k_{1}}\cdots m_{r}^{k_{r}}},$$
and $\zeta_{<N}(\varnothing)=1$, where $\varnothing$ is the \textit{empty index}. If $k_{r}\geq2$, the \textit{multiple zeta value} of index $\bm{k}$ is
  \begin{align}
  \zeta({\bm{k}})=\lim_{N\to\infty}\zeta_{<N}(\bm{k})=\sum_{0<m_{1}<\cdots<m_{r}}\frac{1}{m_{1}^{k_{1}}\cdots m_{r}^{k_{r}}}=\int_{0}^{1}\dfrac{\text{d}t}{1-t}\Big(\dfrac{\text{d}t}{t}\Big)^{k_1-1}\cdots\dfrac{\text{d}t}{1-t}\Big(\dfrac{\text{d}t}{t}\Big)^{k_r-1}\in\RR.
    \end{align}
Recently, Maesaka, Seki and Watanabe (\cite{MSWoriginal}) found the innovative finite sum, which is called the \textit{$\flat$-sum} of multiple harmonic sum
\begin{align}
\label{flat1}
\zeta_{<N}^{\flat}(\bm{k})&=\sum_{\substack{0<n_{j1}\leq\cdots\leq n_{jk_{j}}<N(1\leq j\leq r)\\n_{jk_{j}}<n_{(j+1)1}(1\leq j<r) }}\prod_{j=1}^{r}\frac{1}{(N-n_{j1})n_{j2}\cdots n_{jk_{j}}},
\end{align}
and discovered an interesting result called the \textit{MSW formula} (Maesaka--Seki--Watanabe's formula). In the proof of MSW formula, they use the \textit{connected sum method}, which is established in \cite{SY1} (see more details in \cite{CSMseki}).
\begin{thm}[MSW formula, {\cite[Theorem 1.3]{MSWoriginal}}]\label{MSWformula}
For any $N>0$ and $\bm{k}=(k_{1},\ldots,k_{r})\in(\ZZ_{>0})^r$, it holds
$$\zeta_{<N}(\bm{k})=\zeta_{<N}^{\flat}(\bm{k}).$$
\end{thm}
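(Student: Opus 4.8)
The plan is to follow the \emph{connected sum method} of Seki--Yamamoto \cite{SY1} (see also \cite{CSMseki}), the approach the authors announce. One introduces a \emph{connected sum} $Z_{N}(\bm{k}_{\circ};\bm{k}_{\bullet})$ attached to an ordered splitting of the index $\bm{k}$ into a ``plain part'' $\bm{k}_{\circ}$ and a ``$\flat$-part'' $\bm{k}_{\bullet}$: it is a finite sum over configurations in which the components of $\bm{k}_{\circ}$ contribute plain factors $1/m^{k}$ (as in $\zeta_{<N}$), sitting below the components of $\bm{k}_{\bullet}$, which contribute $\flat$-blocks $\tfrac{1}{(N-n_{1})n_{2}\cdots n_{k}}$ (as in $\zeta^{\flat}_{<N}$), and the two halves are glued by a \emph{connector} $C$ depending only on the few interface variables (the top plain variable, the bottom of the lowest $\flat$-block, and $N$). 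The normalization is chosen so that $Z_{N}(\bm{k};\varnothing)=\zeta_{<N}(\bm{k})$ and $Z_{N}(\varnothing;\bm{k})=\zeta^{\flat}_{<N}(\bm{k})$; in particular $C$ must degenerate to $1$ at both ends of the process.

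The engine of the proof is a \emph{transport relation} of the shape $Z_{N}(\dots,k;\bm{k}_{\bullet})=Z_{N}(\dots;k,\bm{k}_{\bullet})$, asserting that moving the top component of $\bm{k}_{\circ}$ across the interface -- summing out the corresponding plain variable and re-expanding it as a fresh $\flat$-block subject to the inherited inequalities -- leaves $Z_{N}$ unchanged. Depending on how $C$ is organized it is cleaner to perform this one \emph{weight unit} at a time, allowing the interface to cut through the middle of a block and letting $C$ record whether the next variable to be moved is the bottom of its block (which then acquires the factor $N-n_{1}$) or an interior position (which acquires $1/n$). Granting the transport relation, one iterates it $\dep(\bm{k})$ times -- or $\wt(\bm{k})$ times at the finer granularity -- as a chain of equalities running from $Z_{N}(\bm{k};\varnothing)=\zeta_{<N}(\bm{k})$ to $Z_{N}(\varnothing;\bm{k})=\zeta^{\flat}_{<N}(\bm{k})$, which is exactly the assertion. (The cases $\bm{k}=\varnothing$ and $N=1$ are trivial and need none of this.)

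The crux -- and the one genuinely nontrivial step -- is to guess the connector $C$ so that the transport relation becomes an \emph{exact} finite identity rather than merely the asymptotic, Riemann-sum statement that motivates the phenomenon. The tension to resolve is that the global parameter $N$ enters every $\flat$-block through $(N-n_{j1})$, whereas on the plain side $N$ only bounds the top variable; $C$ must absorb exactly this mismatch whenever a variable crosses the interface. I expect $C$ to be a ratio of binomial coefficients (equivalently, of factorials) in the interface variables and $N$, so that after clearing denominators the transport relation collapses to a telescoping sum, perhaps after a short auxiliary induction on the weight $k$ of the piece being moved; the bottom-of-block factor $(N-n_{1})$ will need to be handled slightly apart from the interior factors $n_{2},\dots,n_{k}$. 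Once the right $C$ is pinned down, everything else is bookkeeping.
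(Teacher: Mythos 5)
Your architecture is exactly the one the paper (following Maesaka--Seki--Watanabe) uses: a connected sum $Z_{N}(\bm{k}\,|\,\bm{l})$ with a plain left half, a $\flat$-type right half, a connector gluing them at the interface, boundary conditions recovering $\zeta_{<N}$ and $\zeta_{<N}^{\flat}$, and a transport relation iterated until the whole index has crossed over. You even correctly predict that the transport of one component $k$ is performed one weight unit at a time, with the interior factors $1/n_{j2},\dots,1/n_{jk_{j}}$ and the bottom-of-block factor $1/(N-n_{j1})$ handled by two separate identities for the connector. So this is the same route as the paper, not an alternative one.

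The gap is that you never actually produce the connector or verify the transport relation, and you yourself flag this as ``the one genuinely nontrivial step''; as written, the argument is a correct plan rather than a proof. To close it: with the normalization $Z_{N}(\bm{k}\,|)=\zeta_{<N+1}(\bm{k})$ the paper's connector is
$$C_{N}(m,n)=\binom{n}{m}\bigg/\binom{N}{m},$$
which degenerates to $1$ when $n=N$ (so it disappears at the start) and when $m=0$ (so it disappears once the plain part is exhausted). The two identities you then need are
$$\frac{1}{m}\,C_{N}(m,n)=\sum_{m\leq b\leq n}C_{N}(m,b)\,\frac{1}{b}\qquad(m>0),$$
applied $k-1$ times to peel off the interior factors of the new $\flat$-block, and
$$\sum_{m<a\leq n}\frac{1}{a}\,C_{N}(a,n)=\sum_{m\leq b<n}C_{N}(m,b)\,\frac{1}{N-b}\qquad(m<n),$$
applied once to convert the last remaining plain variable into the factor $1/(N-n_{j1})$. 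Both follow from the one-step differences $\frac{1}{m}\big(C_{N}(m,b)-C_{N}(m,b-1)\big)=C_{N}(m,b)\frac{1}{b}$ and $\frac{1}{a}\big(C_{N}(a,b+1)-C_{N}(a,b)\big)=\big(C_{N}(a-1,b)-C_{N}(a,b)\big)\frac{1}{N-b}$, i.e.\ exactly the telescoping you anticipated. Once these are checked, the chain $\zeta_{<N+1}(\bm{k})=Z_{N}(\bm{k}\,|)=\cdots=Z_{N}(|\,\bm{k})=\zeta_{<N+1}^{\flat}(\bm{k})$ completes your outline verbatim.
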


MSW formula gives a \textit{discretization} of iterated integral expression of the multiple zeta value, in a sense. The duality and the extended double shuffle relations for multiple zeta values are 
reobtained by using MSW formula (see \cite{MSWoriginal}, \cite{MSWEDSR}). Furthermore, Kawamura (\cite{MSWcyclic}) established the framework of the \textit{discrete iterated integrals}, and also obtained the cyclic sum formula for multiple polylogarithms. Hirose--Matsusaka--Seki (\cite{MSWpolylog}) executed a discretization of iterated integral expression of the multiple polylogarithm. As we can see from these examples, MSW formula is expected to have many applications and generalizations.

The present article aims to extend MSW formula to its $q$-analogue. Generally, $q$-integer $[n]_{q},\ n\in\ZZ_{\geq0}$ is defined by $[n]_{q}=(1-q^n)/(1-q)$ and $[0]_{q}=0$. Note that $[n]_{q}\to n$ as $q\to1$ for any $n>1$. In \S2, we give a $q$-analogue of MSW formula and related results:

\begin{thm}\label{main thm 1}
  For any $N>0$ and index $\bm{k}=(k_{1},\ldots,k_{r})$, we have
  $$\zeta_{<N}^{BZ}(\bm{k};q)=\zeta_{<N}^{q\flat}(\bm{k}),$$
  where
  \begin{align}
    \zeta_{<N}^{q\flat}(\bm{k})&=\sum_{\substack{0<n_{j1}\leq\cdots\leq n_{jk_{j}}<N(1\leq j\leq r)\\ n_{jk_{j}}<n_{(j+1)1}(1\leq j<r)}}\prod_{j=1}^{r}\frac{q^{n_{j2}+\cdots+n_{jk_{j}}}}{[N-n_{j1}]_{q}[n_{j2}]_{q}\cdots[n_{jk_{j}}]_{q}},\\
    \zeta_{<N}^{BZ}(\bm{k};q)&=\sum_{0<m_{1}<\cdots<m_{r}<N}\frac{q^{(k_{1}-1)m_{1}+\cdots+(k_{r}-1)m_{r}}}{[m_{1}]_{q}^{k_{1}}\cdots[m_{r}]_{q}^{k_{r}}}.
  \end{align}
\end{thm}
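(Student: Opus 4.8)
The plan is to adapt the connected sum method -- the technique underlying the proof of Theorem~\ref{MSWformula} -- to the Bradley--Zhao model. The point is that $\zeta^{BZ}_{<N}(\bm{k};q)$ and $\zeta^{q\flat}_{<N}(\bm{k})$ are two presentations of the same index $\bm{k}$, one ``fully summed'' and one ``fully discretized,'' so I will build a \emph{$q$-connected sum} $Z_{<N}(\bm{a};\bm{b})$, depending on two indices, interpolating between them. Schematically,
$$Z_{<N}(\bm{a};\bm{b})=\sum\Bigl(\text{Bradley--Zhao factor in the variables of }\bm{a}\Bigr)\cdot C_{<N}(\mu,\nu)\cdot\Bigl(q\flat\text{-factor in the variables of }\bm{b}\Bigr),$$
where $\bm{a}$ records the components already turned into BZ form (contributing $\prod q^{(a_{i}-1)\mu_{i}}/[\mu_{i}]_{q}^{a_{i}}$ over $0<\mu_{1}<\cdots$), $\bm{b}$ records the part still in $q\flat$ form (contributing the blocks $\prod q^{\nu_{j2}+\cdots+\nu_{jb_{j}}}/([N-\nu_{j1}]_{q}[\nu_{j2}]_{q}\cdots[\nu_{jb_{j}}]_{q})$, with the running variable $\mu$ of $\bm{a}$ sitting just below the first variable $\nu$ of $\bm{b}$), and $C_{<N}(\mu,\nu)$ is a connector -- a product of $q$-integers and $q$-binomial coefficients in $\mu$, $\nu$, $N$, times a suitable power of $q$ -- normalized to equal $1$ whenever $\bm{a}$ or $\bm{b}$ is empty. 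With this normalization one gets the two boundary evaluations $Z_{<N}(\varnothing;\bm{k})=\zeta^{q\flat}_{<N}(\bm{k})$ and $Z_{<N}(\bm{k};\varnothing)=\zeta^{BZ}_{<N}(\bm{k};q)$, so it suffices to link these two specializations.

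They are linked by \emph{transport relations} moving the letters of $\bm{k}$, one at a time, from the $q\flat$ side to the Bradley--Zhao side, processing the components in order. Moving an interior letter of a component -- one carrying the factor $q^{\nu}/[\nu]_{q}$ -- requires a summation of the form $\sum_{\nu}(q^{\nu}/[\nu]_{q})\,C_{<N}(\mu,\nu)\cdot(\cdots)=(q^{\mu}/[\mu]_{q})\,C_{<N}(\mu,\nu')\cdot(\cdots)$, i.e. a telescoping in $\nu$ governed by the $q$-integer identity $[b]_{q}-[b-\mu]_{q}=q^{b-\mu}[\mu]_{q}$; this is the step that thickens the exponent of $q^{\mu}$ on the BZ side and that forces the $q$-binomial into $C_{<N}$. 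Moving the first letter of a component -- the one carrying $1/[N-\nu]_{q}$ -- requires a second summation identity, which additionally replaces $1/[N-\nu]_{q}$ by $1/[\mu]_{q}$ and creates a fresh BZ variable; here the parameter $N$ gets absorbed, and the relevant $q$-tools are the $q$-Pascal rule and the partial-fraction identity $[a]_{q}+[b]_{q}=[a+b]_{q}+(1-q)[a]_{q}[b]_{q}$. Iterating these moves $\wt(\bm{k})$ times turns $Z_{<N}(\varnothing;\bm{k})$ into $Z_{<N}(\bm{k};\varnothing)$, which is the assertion of the theorem.

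The main obstacle is to pin down the connector $C_{<N}$ -- in particular the power of $q$ it carries -- so that \emph{both} transport identities and \emph{both} degeneracy conditions hold simultaneously; once the right $C_{<N}$ is in hand, each transport identity reduces to a finite $q$-hypergeometric summation provable by a routine induction on the length of the sum. The delicate part is the bookkeeping of the numerator exponents: the factors $q^{\nu_{j2}+\cdots+\nu_{jk_{j}}}$ of $\zeta^{q\flat}_{<N}$ cannot be inserted by fiat but have to emerge from the telescoping, and the demand that processing all $k_{j}$ letters of the $j$-th component reproduce exactly the $j$-th factor $q^{(k_{j}-1)m_{j}}/[m_{j}]_{q}^{k_{j}}$ of $\zeta^{BZ}_{<N}(\bm{k};q)$ rigidly constrains the $q$-weights in both $C_{<N}$ and in the definition of $\zeta^{q\flat}_{<N}$ itself. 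As a check, letting $q\to1$ collapses $Z_{<N}$ to the Maesaka--Seki--Watanabe connected sum and the whole argument to their proof of Theorem~\ref{MSWformula}; roughly, one could also phrase the argument as a discretization of a $q$-iterated-integral (Jackson-integral) representation of $\zeta^{BZ}_{<N}(\bm{k};q)$, which amounts to the same thing.
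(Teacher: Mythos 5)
Your proposal is correct and follows essentially the same route as the paper: the paper's proof is exactly the connected sum method with the $q$-connector $C^{q}_{N}(m,n)=\binom{n}{m}_{q}/\binom{N}{m}_{q}$ (no extra power of $q$ needed), the two telescoping identities of your transport steps being \eqref{1} and \eqref{2}, driven by $[b]_{q}-[b-m]_{q}=q^{b-m}[m]_{q}$ as you indicate. The only detail left open in your sketch, pinning down the connector, is resolved by taking the plain $q$-binomial ratio above.
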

Here, $\zeta_{<N}^{BZ}(\bm{k})=\zeta_{<N}^{BZ}(\bm{k};q)$ is called the multiple harmonic $q$-sum of \textit{Bradley--Zhao model} (cf. \cite{Bradley1}). If $k_{r}\geq2$, $\zeta_{<N}^{BZ}(\bm{k})$ converges as $N\to\infty$ and $\lim_{q\to1}\lim_{N\to\infty}\zeta_{<N}^{BZ}(\bm{k})=\zeta(\bm{k})$. In this paper, we also give a new proof of the following:
  \begin{thm}[K. Hessami Pilehrood--T. Hessami Pilehrood--Tauraso {\cite[Theorem 3.1]{KhHessamiPilehrood1}}]\label{Thm:Hessami}
   Let $N$ be a positive integer and $\zeta_{N}$ be a primitive $N$-th root of unity. For any index $\bm{k}=(k_{1},\ldots,k_{r})$, it holds
    $$\zeta_{<N}^{BZ}(\bm{k};\zeta_{N})=(-1)^{r}\sum_{\substack{0<n_{j1}\leq\cdots\leq n_{jk_{j}}<N(1\leq j\leq r)\\ n_{jk_{j}}<n_{(j+1)1}(1\leq j<r)}}\prod_{j=1}^{r}\frac{\zeta_{N}^{n_{j1}+\cdots+n_{jk_{j}}}}{[n_{j1}]_{\zeta_{N}}\cdots[n_{jk_{j}}]_{\zeta_{N}}}.$$
\end{thm}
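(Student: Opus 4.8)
The plan is to derive Theorem~\ref{Thm:Hessami} directly from the $q$-analogue of the MSW formula, Theorem~\ref{main thm 1}, by specializing the parameter $q$ to the primitive $N$-th root of unity $\zeta_{N}$. For a fixed $N>0$ and index $\bm{k}$, both $\zeta_{<N}^{BZ}(\bm{k};q)$ and $\zeta_{<N}^{q\flat}(\bm{k})$ are finite sums of rational functions of $q$, so Theorem~\ref{main thm 1} is an identity in $\QQ(q)$. Its denominators are products of factors $[m]_{q}=(1-q^{m})/(1-q)$ with $0<m<N$ (on the $\flat$-side one also meets $[N-n_{j1}]_{q}$, with $0<N-n_{j1}<N$, and on the $BZ$-side the factors $[m_{i}]_{q}^{k_{i}}$ with $0<m_{i}<N$). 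Since the $N$-th cyclotomic polynomial divides $1-q^{m}$ only when $N\mid m$, none of these factors vanishes at $q=\zeta_{N}$; hence both sides are regular there, and the identity of Theorem~\ref{main thm 1} remains valid after the substitution $q=\zeta_{N}$.

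The sole computational ingredient is the elementary identity, valid for every integer $n$ with $0<n<N$,
\[
[N-n]_{\zeta_{N}}=\frac{1-\zeta_{N}^{\,N-n}}{1-\zeta_{N}}=\frac{1-\zeta_{N}^{-n}}{1-\zeta_{N}}=-\zeta_{N}^{-n}\cdot\frac{1-\zeta_{N}^{\,n}}{1-\zeta_{N}}=-\zeta_{N}^{-n}[n]_{\zeta_{N}}.
\]
Applying it with $n=n_{j1}$ inside each of the $r$ factors of $\zeta_{<N}^{q\flat}(\bm{k})$ at $q=\zeta_{N}$ turns
\[
\frac{\zeta_{N}^{\,n_{j2}+\cdots+n_{jk_{j}}}}{[N-n_{j1}]_{\zeta_{N}}[n_{j2}]_{\zeta_{N}}\cdots[n_{jk_{j}}]_{\zeta_{N}}}
=-\,\frac{\zeta_{N}^{\,n_{j1}+n_{j2}+\cdots+n_{jk_{j}}}}{[n_{j1}]_{\zeta_{N}}[n_{j2}]_{\zeta_{N}}\cdots[n_{jk_{j}}]_{\zeta_{N}}},
\]
the missing power $\zeta_{N}^{\,n_{j1}}$ in the numerator being supplied by the factor $\zeta_{N}^{-n_{j1}}$ coming out of the denominator. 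Taking the product over $j=1,\ldots,r$ pulls out the global sign $(-1)^{r}$ and leaves exactly the summand on the right-hand side of Theorem~\ref{Thm:Hessami}; the ranges of summation, cut out by $0<n_{j1}\leq\cdots\leq n_{jk_{j}}<N$ and $n_{jk_{j}}<n_{(j+1)1}$, are literally the same on both sides, so no reindexing is needed and the theorem follows. The case $r=0$ is trivial, both sides being $1$.

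Given Theorem~\ref{main thm 1}, I do not foresee a serious difficulty; the two points that demand care are (i) checking, as above, that the $q$-MSW identity --- which a priori lives in $\QQ(q)$ --- may legitimately be evaluated at the root of unity $\zeta_{N}$, and (ii) tracking the sign and the shift of exponents in the reduction $[N-n]_{\zeta_{N}}=-\zeta_{N}^{-n}[n]_{\zeta_{N}}$ correctly. What is genuinely new is the \emph{route}: in contrast to the original argument of K.~Hessami Pilehrood--T.~Hessami Pilehrood--Tauraso in \cite{KhHessamiPilehrood1}, the finite multiple harmonic $q$-series at $\zeta_{N}$ is here obtained as a specialization of the discretized iterated-integral object $\zeta_{<N}^{q\flat}$, which is precisely the shape in which the duality relation for these sums becomes transparent, in the spirit of \cite{MSWoriginal}.
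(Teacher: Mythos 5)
Your proposal is correct and follows exactly the route the paper takes: specialize Theorem~\ref{main thm 1} at $q=\zeta_{N}$ and rewrite each factor via $1/[N-n_{j1}]_{\zeta_{N}}=-\zeta_{N}^{n_{j1}}/[n_{j1}]_{\zeta_{N}}$ to extract the sign $(-1)^{r}$. The only difference is that you additionally spell out why the evaluation at $q=\zeta_{N}$ is legitimate (no denominator $[m]_{q}$ with $0<m<N$ vanishes there), a point the paper leaves implicit.
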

The formula in \Cref{Thm:Hessami} is proved in \cite{KhHessamiPilehrood1} by using Theorem A (cf. {\cite[Theorem 8.1]{KhHessamiPilehrood2}}), in the research based on the study in Bachmann--Takeyama--Tasaka \cite{BTT1,BTT2}. In the present paper, we give the proof of \Cref{Thm:Hessami} by applying \Cref{main thm 1}. Since Theorem A and \Cref{main thm 1} are essentially different identities of each other, we can assure that it is a new proof of \Cref{Thm:Hessami}. Furthermore, we regard \Cref{Thm:Hessami} as a $q$-analogue of the Hoffman duality ({\cite[Theorem 4.6]{Hoffman1}}). So we can affirm that the proof of \Cref{Thm:Hessami} provides a $q$-analogue of the discussion of section 3 in \cite{MSWoriginal}. 

Following the study in Nakasuji--Phuksuwan--Yamasaki \cite{SchurNPY}, Yamamoto (\cite{MSWyamamoto}) extended MSW formula to (skew-)Schur multiple harmonic sums, defined by
$$\zeta_{<N}(\bm{k})=\sum_{(m_{ij})\in\text{SSYT}_{<N}(D)}\prod_{(i,j)\in D}\frac{1}{m_{ij}^{k_{ij}}}.$$
Here, $\bm{k}=(k_{ij})$ is an index on a skew Young diagram $D$ , i.e. $(i,j)\in D$ (cf. \cite{HMOSchurInt}), and $\text{SSYT}_{<N}(D)$ is the set of semi-standard Young tableaux on $D$ whose entries are positive integers less than $N$. Yamamoto defined the Schur type of the $\flat$-sum $\zeta_{<N}^{\flat}(\bm{k})$. Hereinafter we abbreviate $\flat$-sum as $\zeta_{<N}^{\flat}(\bm{k})$ regardless of the shape of index $\bm{k}$. Note that the following theorem is a generalization of \Cref{MSWformula}.

\begin{thm}[Yamamoto {\cite[Theorem 3.6]{MSWyamamoto}}]\label{Thm:Yam3.6}
  For any diagonally constant index $\bm{k}=(k_{ij})$ (i.e. $k_{ij}$ depends only on $i-j$) and $N>0$, it holds
  $$\zeta_{<N}(\bm{k})=\zeta_{<N}^{\flat}(\bm{k}).$$
\end{thm}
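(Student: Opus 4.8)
The plan is to reduce \Cref{Thm:Yam3.6} to the chain case already established in \Cref{MSWformula} by exploiting the connected sum machinery rather than redoing the combinatorics from scratch. First I would recall the connected sum proof of \Cref{MSWformula}: one introduces a connector $Z(\ell;m)$ interpolating between the multiple harmonic sum and the $\flat$-sum, with boundary values recovering $\zeta_{<N}(\bm k)$ when all ``$\flat$-variables'' are exhausted and $\zeta_{<N}^{\flat}(\bm k)$ when all ``ordinary'' variables are exhausted, and then one proves a single \emph{transport relation} showing that moving one box from the ordinary side to the $\flat$ side preserves the total sum. The first step is therefore to write down the Schur-type connected sum: a sum over a partially-filled tableau on $D$, where on an initial segment of the reading order the entries are summed $\flat$-style and on the complementary segment they are summed in the ordinary SSYT fashion, glued by a product of connectors along the ``frontier'' between the two regions.

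Second, I would set up the right notion of ``reading order'' on the skew diagram $D$. Because $\bm k$ is diagonally constant, the key structural fact is that the monotonicity constraints in $\mathrm{SSYT}_{<N}(D)$ — weakly increasing along rows, strictly increasing down columns — can be processed one cell at a time along, say, a column-reading order, and at each step the local configuration around the cell being transported looks exactly like the local configuration in the chain (one-row) case, \emph{provided} the exponent $k_{ij}$ attached to that cell depends only on $i-j$ so that the weights introduced by the transport step on a diagonal do not depend on which row/column of that diagonal we are on. This diagonal-constancy is precisely what makes the bookkeeping of the $q$-weights $q^{n_{j2}+\cdots+n_{jk_j}}$ (and in the $q=1$ case, the plain numerators) consistent across the whole diagonal. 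The main step is then to prove the transport lemma: the elementary move that converts one ordinary cell into a $\flat$-cell, summed against the connector, leaves the connected sum invariant. I expect this to follow from the same partial-fraction / telescoping identity used in \cite{MSWoriginal} and \cite{MSWyamamoto}, now applied locally at the frontier, with the neighbouring cells' values playing the role of the fixed endpoints $\ell$ and $m$ in the connector.

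Third, once the transport lemma is in place, one iterates it over all cells in the chosen reading order. Starting from the fully-ordinary configuration (which is $\zeta_{<N}(\bm k)$) and ending at the fully-$\flat$ configuration (which is $\zeta_{<N}^{\flat}(\bm k)$ by Yamamoto's definition of the Schur-type $\flat$-sum), the invariance at each step yields the claimed equality. A small amount of care is needed at the boundary cells of $D$ — cells with no neighbour above, or no neighbour to the left — where the connector degenerates; there the transport identity should reduce to the ``$N-n$'' boundary factor already visible in \eqref{flat1}, so these cases should be handled as degenerate instances of the same lemma rather than separately.

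The hard part will be the transport lemma in the presence of a genuinely two-dimensional frontier: in the chain case the frontier is a single point, whereas on a skew diagram the boundary between the processed and unprocessed regions is a lattice path, and one must check that transporting a cell only interacts with its immediate north and west neighbours (and east/south, through the connectors attached to them) and not with the rest of the frontier. Making this locality precise — essentially, that the connected sum factors as a product over frontier edges and that a single transport move only touches the two or three factors incident to the moved cell — is where diagonal constancy of $\bm k$ is used in an essential way, and is the step I would spend the most effort justifying carefully.
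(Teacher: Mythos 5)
Your high-level strategy (a connected sum interpolating between $\zeta_{<N}(\bm{k})$ and $\zeta_{<N}^{\flat}(\bm{k})$, a transport relation, boundary conditions) is the right one, but the mechanism you propose at the crucial step is not the one that works, and the step you yourself flag as ``the hard part'' is exactly where the proposal has a genuine gap. You propose a cell-by-cell transport along a reading order, with the two regions glued by \emph{a product of connectors along a lattice-path frontier}, and you hope to show that the connected sum ``factors as a product over frontier edges'' so that a single move only touches a few factors. That factorization is not available: the semi-standardness constraints (weak increase along rows, strict increase down columns) couple all cells along the frontier, and a product of scalar connectors $C_N(m,n)$ cannot encode the interlacing between a tuple of ordinary variables on one diagonal and a tuple of $\flat$-variables on the next. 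The proof in the paper (following Yamamoto) circumvents precisely this difficulty by replacing the product with a \emph{determinant} of connectors, $D_N(\bm{m},\bm{n})=\det\bigl(C_N(\widetilde{m}_{j_1},\widetilde{n}_{j_2})\bigr)$ as in \Cref{Yam:def}, and by transporting an \emph{entire diagonal at once} rather than a single cell: the connected sum $Z(\bm{k};a)$ has its frontier between diagonals $a$ and $a+1$, and the transport relation $Z(\bm{k};a)=Z(\bm{k};a-1)$ follows from the determinantal identities of \Cref{Lem:Yam3.11}, i.e.\ \eqref{Lem:3.8(1)} applied $k_a$ times and then \eqref{Lem:3.8(2)}, performed componentwise inside the determinant. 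This is also where diagonal constancy enters, and somewhat differently from how you describe it: it guarantees that every cell of the diagonal being transported carries the same exponent $k_a$, so the same number of scalar transport steps can be applied simultaneously to all components of $\bm{m}_a$.

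Concretely, then, the gap is that the locality/factorization claim in your final paragraph is not merely unproved but is the wrong device: a product connector with single-cell moves does not respect the column-strictness across the frontier, and nothing in the chain case of \cite{MSWoriginal} supplies the missing coupling. To complete the argument you should adopt the determinant connector and the diagonal-at-a-time frontier, verify the vectorized transport identities of \Cref{Lem:Yam3.11}, and check the boundary identifications $Z(\bm{k};p_1)=\zeta_{<N}(\bm{k})$ and $Z(\bm{k};p_0-1)=\zeta_{<N}^{\flat}(\bm{k})$ (noting that $D_N$ degenerates to a single $C_N$ only when the frontier diagonal consists of one cell); with those pieces in place the telescoping $Z(\bm{k};p_1)=\cdots=Z(\bm{k};p_0-1)$ you envisage goes through exactly as in the paper.
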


In \S3, we prove a $q$-analogue of \Cref{Thm:Yam3.6} which is stated as follows:

\begin{thm}[main result]\label{main thm 2}
  For any $N>0$ and diagonally constant index $\bm{k}$, we have
  $$\zeta_{<N}^{BZ}(\bm{k})=\zeta_{<N}^{q\flat}(\bm{k}),$$
  where $\zeta_{<N}^{q\flat}(\bm{k})$ is the $q$-analogue of $\flat$-sum $\zeta_{<N}^{\flat}(\bm{k})$.
\end{thm}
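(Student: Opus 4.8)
The plan is to adapt Yamamoto's connected-sum proof of \Cref{Thm:Yam3.6} to the $q$-setting, using the Bradley--Zhao $q$-deformation that already works in the one-row case of \Cref{main thm 1}. First I would define, for a skew diagram $D$ and a diagonally constant index $\bm{k}=(k_{ij})$, a \emph{$q$-connected sum} interpolating between $\zeta_{<N}^{BZ}(\bm{k})$ and $\zeta_{<N}^{q\flat}(\bm{k})$. As in Yamamoto's argument, the connector is attached along an ``antichain'' (a sequence of cells forming a horizontal slice of the diagram), and in the $q$-analogue the connector should be a product of factors of the shape $\frac{[a]_q!\,[b]_q!}{[a+b]_q!}$ or, equivalently, a $q$-binomial-type kernel $\frac{1}{[a]_q\binom{a+b}{a}_q}$ times an appropriate power of $q$; the power of $q$ is the crucial bookkeeping, and it must be chosen so that when the ``sum side'' is fully developed one recovers exactly the exponents $q^{(k_{ij}-1)m_{ij}}$ of $\zeta_{<N}^{BZ}$, and when the ``$\flat$-side'' is fully developed one recovers exactly the exponents $q^{n_{j2}+\cdots+n_{jk_j}}$ of $\zeta_{<N}^{q\flat}$.

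The key steps, in order, are: (1) set up the $q$-connected sum $Z(D_1 \mid D_2)$ for a decomposition of $D$ into a ``processed'' part and an ``unprocessed'' part separated by a connector along an antichain, with boundary cases $Z(D\mid\varnothing)=\zeta_{<N}^{BZ}(\bm{k})$ and $Z(\varnothing\mid D)=\zeta_{<N}^{q\flat}(\bm{k})$; (2) prove the basic \emph{transport relation} — a single $q$-hypergeometric identity moving one cell across the connector — which is the $q$-analogue of the combinatorial identity Yamamoto uses, and which should reduce to the one used in proving \Cref{main thm 1} when $D$ is a single row; (3) show that iterating the transport relation cell by cell, always peeling the next available cell that keeps the separating set an antichain, is legitimate, i.e. the SSYT constraints and the strict/weak inequalities match up on both sides at every intermediate stage (this is where diagonal-constancy of $\bm{k}$ is used, exactly as in \Cref{Thm:Yam3.6}); (4) conclude by induction on the number of cells of $D$ that $Z(D\mid\varnothing)=Z(\varnothing\mid D)$.

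The main obstacle I expect is step (2): finding the correct $q$-deformation of the connector and verifying the transport relation with the powers of $q$ in the right places. In the classical case the connector is $\frac{a!\,b!}{(a+b)!}$ and the transport identity is an elementary partial-fraction/telescoping computation; the $q$-version requires a $q$-binomial connector together with a compensating monomial $q^{(\text{something linear in the summation variables})}$, and getting that monomial exponent right — so that it splits correctly between the factor absorbed into the processed side and the factor left on the unprocessed side at each transport step — is the delicate point. Once the one-cell transport lemma is established with the correct $q$-weights, steps (1), (3), (4) are formal and parallel Yamamoto's argument (and the already-proved \Cref{main thm 1}), so the whole proof hinges on that lemma. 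A secondary, more bookkeeping-heavy point is carefully specifying the order in which cells are transported for a general skew diagram and checking that the antichain condition is preserved throughout; this is routine but must be stated precisely, since the Schur-type $\flat$-sum $\zeta_{<N}^{q\flat}(\bm{k})$ is defined via the final configuration of that process.
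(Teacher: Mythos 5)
Your overall strategy --- interpolate between $\zeta_{<N}^{BZ}(\bm{k})$ and $\zeta_{<N}^{q\flat}(\bm{k})$ by a $q$-connected sum, prove a one-step transport relation, and chain it --- is exactly the paper's, and you correctly single out the choice of $q$-weights in the transport identities as the delicate point (the paper's answer, \Cref{my lem 2}, is that the left-transport carries $q^m/[m]_q$ and $q^b/[b]_q$ while the right-transport carries $q^{n-a}/[n]_q$; these are forced by the two boundary conditions). The genuine gap is in your step (2): for the Schur case the connector cannot be a \emph{product} of $q$-binomial kernels peeled off cell by cell. The object that the paper $q$-deforms is Yamamoto's \emph{determinantal} connector $D_N^q(\bm{m},\bm{n})=\det\bigl(C_N^q(\widetilde m_{j_1},\widetilde n_{j_2})\bigr)_{j_1,j_2\in J\cup J'}$, taken over the union of two consecutive \emph{diagonals} $D_p=\{(i,j): i-j=p\}$ of the diagram (not horizontal slices --- the diagonal decomposition is precisely where diagonal-constancy of $\bm{k}$ enters, since each diagonal carries a single exponent $k_p$), with the boundary padding $\widetilde m_j=0$, $\widetilde n_j=N-1$ of \Cref{Yam:def}. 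The transport then moves one whole diagonal at a time: apply the multivariable identity \eqref{lem1} $k_a$ times, then \eqref{lem2} once.

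This determinantal structure is not optional bookkeeping. The scalar right-transport identity \eqref{3.5(2)} produces a \emph{difference} $C_N^q(m,n-1)-C_N^q(m',n-1)$, and when a diagonal has several cells these signed terms must be reassembled into a single object for the next step; the determinant (via its cofactor expansion) is what makes that reassembly, and the telescoping over the interleaved constraints $\bm{b}\inleq\bm{n}$, go through. Already in the paper's two-cell examples the connector appearing after one transport step is $1-C_N^q(m,n-1)$, a signed expression that no product of positive kernels can yield, so a cell-by-cell peeling with a product connector would break at the first diagonal containing two cells or at the first pair of overlapping consecutive diagonals. To close the gap you need (a) the scalar identities of \Cref{my lem 2} with the exact $q$-powers, and (b) their determinantal upgrades \eqref{lem1}--\eqref{lem2}, which are the $q$-analogues of \Cref{Lem:Yam3.11}; given those, your steps (1), (3), (4) do indeed proceed formally as in \Cref{Thm:Yam3.6}.
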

Note that \Cref{main thm 2} includes \Cref{main thm 1} as the case when $\bm{k}=\begin{ytableau}
k_{1} \\ \vdots \\  k_{r}
\end{ytableau}$ and \Cref{Thm:Yam3.6}.

\section{Extension of the MSW formula} 
First, we begin to state some notations. Let $N$ be a positive integer. For non-negative integers $m,\ n$ satisfying $m\leq n\leq N$, we define the \textit{connector} $C_{N}(m,n)$ as
$$C_{N}(m,n)=\left. \binom{n}{m} \middle/ \binom{N}{m}. \right. $$
In addition, for $\bm{k}=(k_{1},\ldots,k_{r})$ and $\bm{l}=(l_{1},\ldots,l_{s})$, we define the \textit{connected sum} $Z_{N}(\bm{k}\ |\ \bm{l})$ as
$$Z_{N}(\bm{k}\ |\ \bm{l})=\sum_{\substack{0<m_{1}<\cdots<m_{r}\leq n_{11}\\ n_{j1}\leq\cdots\leq n_{jl_{j}}\leq n_{(j+1)1}(1\leq j<s)\\ n_{(j+1)1}<n_{(j+1)2}(1\leq j<s)\\ n_{s2}\leq\cdots\leq n_{sl_{s}}<N}}\frac{1}{m_{1}^{k_{1}}\cdots m_{r}^{k_{r}}}\cdot C_{N}(m_{r},n_{11})\cdot\prod_{j=1}^{s}\frac{1}{(N-n_{j1})n_{j2}\cdots n_{jl_{j}}}.$$

\begin{rem}
  By definition, if $\bm{k}=\varnothing$ or $\bm{l}=\varnothing$, we set
  $$Z_{N}(\bm{k}\ |)=\zeta_{<N+1}(\bm{k}),\ Z_{N}(|\ \bm{l})=\zeta_{<N+1}^{\flat}(\bm{k}),$$
 respectively. We often call it as the \textit{boundary conditions}. 
\end{rem}
The proof of \Cref{MSWformula} uses the following lemmas. 

\begin{lem}[{\cite[Lemma 4.1]{MSWoriginal}}]\label{Lem:MSW4.1}
  For non-negative integers $m,n$ satisfying $m\leq n\leq N$, it holds
\begin{align}
   \frac{1}{m}C_{N}(m,n)&=\sum_{m\leq b\leq n}C_{N}(m,b)\frac{1}{b}\quad (m>0), \\
   \sum_{m<a\leq n}\frac{1}{a}C_{N}(a,n)&=\sum_{m\leq b<n}C_{N}(m,b)\frac{1}{N-b}\quad (m<n).
\end{align}
\end{lem}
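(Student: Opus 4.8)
The plan is to unwind the connector to its closed form $C_N(m,n)=\binom nm\big/\binom Nm=\tfrac{n!\,(N-m)!}{(n-m)!\,N!}$ and reduce each of the two identities to an elementary binomial computation, using the symmetry $C_N(m,n)=C_N(N-n,N-m)$ that is immediate from that form. The first identity is the easy one: from the pointwise relation $\tfrac1b\binom bm=\tfrac1m\binom{b-1}{m-1}$, i.e. $\tfrac1b C_N(m,b)=\tfrac1m\binom{b-1}{m-1}\big/\binom Nm$, summing over $m\le b\le n$ and applying the hockey-stick identity $\sum_{b=m}^{n}\binom{b-1}{m-1}=\binom nm$ collapses the right-hand side to $\tfrac1m\binom nm\big/\binom Nm=\tfrac1m C_N(m,n)$.

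The second identity is the real content. The naive move---applying $a\mapsto N-a$, $b\mapsto N-b$ and the symmetry of $C_N$---yields a statement of the same shape but does not interchange the weights $\tfrac1a$ and $\tfrac1{N-b}$, so it does not simply reduce to the first identity. Instead I would fix $m$ and induct on $n$, the base case $n=m+1$ being the one-line check that both sides equal $\tfrac{m!\,(N-m-1)!}{N!}$. For the inductive step I would compare increments in $n$: the right-hand increment is exactly $C_N(m,n-1)\big/(N-n+1)$, while the left-hand increment is $\tfrac1n C_N(n,n)+\sum_{a=m+1}^{n-1}\tfrac1a\bigl(C_N(a,n)-C_N(a,n-1)\bigr)$. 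The Pascal-type identity $C_N(a,n)-C_N(a,n-1)=\tfrac{a}{N-a+1}\,C_N(a-1,n-1)$ rewrites the sum as $\sum_{a=m}^{n-2}C_N(a,n-1)\big/(N-a)$, which is evaluated by the reflected form of the first identity, $\sum_{p\le a\le q}C_N(a,q)\big/(N-a)=C_N(p,q)\big/(N-q)$ (obtained from the first identity by $(m,n)\mapsto(N-n,N-m)$ and $b\mapsto N-a$). A short computation then shows the boundary terms $\tfrac1n C_N(n,n)$ and $\tfrac1{N-n+1}C_N(n-1,n-1)$ cancel, both being $\tfrac{(n-1)!\,(N-n)!}{N!}$, leaving precisely $C_N(m,n-1)\big/(N-n+1)$, which matches the right-hand increment and completes the induction.

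The hard part is therefore the second identity, and the obstacle is not any single computation---the Pascal-type recursion and the boundary cancellation are immediate---but recognising that the reflected version of the first identity is exactly what closes the induction. One should also keep an eye on the edge cases: $m=0$, where the hypothesis $m>0$ of the first identity is never used, only its reflected form; and $n$ near $N$, where one checks that no denominator such as $N-n+1$ vanishes over the relevant range.
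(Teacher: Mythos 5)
Your argument is correct, and it is worth recording how it differs from the route taken here. The paper does not reprove this classical lemma (it is quoted from Maesaka--Seki--Watanabe), but its own proof of the $q$-analogue, which specializes to this statement at $q=1$, reduces each identity to a one-step difference identity --- $\tfrac1m\bigl(C_N(m,b)-C_N(m,b-1)\bigr)=C_N(m,b)\tfrac1b$ for the first, and the two-sided identity $\tfrac1a\bigl(C_N(a,b+1)-C_N(a,b)\bigr)=\bigl(C_N(a-1,b)-C_N(a,b)\bigr)\tfrac1{N-b}$ for the second --- and then telescopes, in the second case by summing over the triangle $m<a\le b<n$ so that the left side telescopes in $b$ and the right side in $a$. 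Your proof of the first identity via $\tfrac1b\binom{b}{m}=\tfrac1m\binom{b-1}{m-1}$ and the hockey-stick identity is just a repackaging of that telescoping. For the second identity you do something genuinely different: you fix $m$, induct on $n$, and close the induction by feeding the increment through the reflected form of the first identity, obtained from the symmetry $C_N(m,n)=C_N(N-n,N-m)$. All the computations you invoke check out (the Pascal-type step $C_N(a,n)-C_N(a,n-1)=\tfrac{a}{N-a+1}C_N(a-1,n-1)$, the cancellation of the two boundary terms $\tfrac{(n-1)!\,(N-n)!}{N!}$, and the range restrictions $q=n-1<N$ needed for the reflected identity), so the proof is complete. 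What your approach buys is a conceptual explanation of why the two identities are ``the same'' identity up to reflection; what the paper's approach buys is robustness, since the one-step difference identities carry over verbatim to the $q$-deformed connector $C^q_N(m,n)$ and to the determinant connectors $D_N(\bm m,\bm n)$ of the Schur case, whereas the reflection no longer exchanges the $q$-weights $q^b/[b]_q$ and $1/[N-b]_q$, so your reduction of the second identity to the first would not survive the generalizations this paper is after.
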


\begin{lem}[{\cite[Lemma 4.2]{MSWoriginal}}]\label{Lem:MSW4.2}
  For $k\in\ZZ_{>0}$ and indices $\bm{k},\ \bm{l}$, it holds
  \begin{alignat}{3}
    Z_{N}(\bm{k},k\ |\ \bm{l})&=Z_{N}(\bm{k}\ |\ k,\bm{l}),\\
    Z_{N}(\bm{k},k\ |)&=Z_{N}(\bm{k}\ |\ k),\\
    Z_{N}(k\ |\ \bm{l})&=Z_{N}(|\ k,\bm{l}).
  \end{alignat}
\end{lem}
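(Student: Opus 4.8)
The three identities are the transport relations of the connected-sum method: each moves the last entry $k$ of the left index across the bar into the first position of the right index, and all three are proved by rewriting the summand that defines $Z_N$ with the help of the two identities in \Cref{Lem:MSW4.1}. The plan is to establish the first relation $Z_N(\bm k,k\ |\ \bm l)=Z_N(\bm k\ |\ k,\bm l)$ in detail, and then to obtain the other two as the degenerate cases $\bm l=\varnothing$ and $\bm k=\varnothing$, using the boundary conventions for $Z_N$. For the first relation, fix every summation variable of $Z_N(\bm k,k\ |\ \bm l)$ except the last variable $m'$ of the left index (the variable carrying the exponent $k$) and the variables of the right index's first block, and work with the single local factor $\frac{1}{(m')^{k}}\,C_N(m',n_{11})$, where $n_{11}$ is the first variable occurring on the right of the bar.

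When $k=1$ the rewriting is a single step: $\sum_{m_r<m'\le n_{11}}\frac{1}{m'}C_N(m',n_{11})$ is, by the second identity of \Cref{Lem:MSW4.1} applied with $(m,n)=(m_r,n_{11})$, equal to $\sum_{m_r\le b<n_{11}}C_N(m_r,b)\,\frac{1}{N-b}$, and the right-hand side is precisely the contribution of a new length-$1$ block $(b)$ placed in front of the right index (connector $C_N(m_r,b)$, factor $\frac{1}{N-b}$, the prescribed range $m_r\le b<n_{11}$); matching the two multiple sums termwise yields $Z_N(\bm k,1\ |\ \bm l)=Z_N(\bm k\ |\ 1,\bm l)$. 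For general $k\ge 2$ the rewriting has two stages. First, applying the first identity of \Cref{Lem:MSW4.1} a total of $k-1$ times, each time to a factor of the form $\frac{1}{m'}C_N(m',\cdot)$, turns $\frac{1}{(m')^{k}}C_N(m',n_{11})$ into a sum of $\frac{1}{m'}\,C_N(m',b_{k-1})\cdot\frac{1}{b_1b_2\cdots b_{k-1}}$ over new variables with $m'\le b_{k-1}\le\cdots\le b_1\le n_{11}$. Second, a single application of the second identity of \Cref{Lem:MSW4.1}, with $(m,n)=(m_r,b_{k-1})$, rewrites $\sum_{m_r<m'\le b_{k-1}}\frac{1}{m'}C_N(m',b_{k-1})$ as $\sum_{m_r\le b_k<b_{k-1}}C_N(m_r,b_k)\,\frac{1}{N-b_k}$. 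Relabelling $a_1=b_k,\,a_2=b_{k-1},\dots,a_k=b_1$, the result is the contribution of a new length-$k$ block $(a_1,\dots,a_k)$ with connector $C_N(m_r,a_1)$, factor $\frac{1}{(N-a_1)a_2\cdots a_k}$, and range $m_r\le a_1<a_2\le a_3\le\cdots\le a_k\le n_{11}$ — that is, exactly the summand of $Z_N(\bm k\ |\ k,\bm l)$. The boundary relations come out of the same computation: running it with $n_{11}$ replaced by the boundary value $N$ (so that $C_N(m',N)=1$) gives $Z_N(\bm k,k\ |)=Z_N(\bm k\ |\ k)$, and running it with $\bm k=\varnothing$ — so that the surviving connector $C_N(0,\cdot)$ is trivially $1$ — gives $Z_N(k\ |\ \bm l)=Z_N(|\ k,\bm l)$; in each case the boundary conventions for $Z_N$ make the two sides match.

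The substance of the argument is entirely in the bookkeeping of the order constraints. Each use of the first identity of \Cref{Lem:MSW4.1} inserts one new dummy variable, weakly between $m'$ and the current upper bound; the single use of the second identity slides the connector's left endpoint down from $m'$ to $m_r$ and, crucially, introduces exactly one strict inequality. I expect the delicate point to be checking that after all the rewritings and the relabelling the ranges of the new variables agree, term by term, with the ranges prescribed by the definition of $Z_N$ for the transported index — in particular that the strict inequality produced by the second identity lands in the one place $Z_N$ wants it in a freshly created block (between its first and second entries when $k\ge 2$, and across the bridge to the old first block when $k=1$), and that no term is gained or lost at the endpoints of the ranges or at the boundary value $N$. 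There are no analytic subtleties; the whole proof is a finite rearrangement of the defining sums.
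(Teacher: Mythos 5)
Your proposal is correct and follows exactly the route the paper (and the original MSW paper) takes: the paper's own argument for these transport relations is precisely ``apply the first identity of \Cref{Lem:MSW4.1} $k-1$ times to peel off the powers of $1/m'$, then the second identity once to slide the connector's left endpoint from $m'$ down to $m_r$,'' which is what you carry out in detail, including the correct bookkeeping that the single strict inequality lands between the first two entries of the freshly created block (or across the bridge when $k=1$). The only caveat is that your termwise match is with the intended MSW definition of $Z_N$: as transcribed in this paper the constraint region of $Z_N(\bm{k}\ |\ \bm{l})$ places the strictness $n_{(j+1)1}<n_{(j+1)2}$ only in blocks $2,\dots,s$ and leaves block $1$ fully weak (and omits the $n_{11}$--$n_{12}$ relation when $s=1$), which does not literally agree with what the transport produces; this is a defect of the printed definition rather than of your argument.
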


We often call the relations in \Cref{Lem:MSW4.2} as the \textit{transport relations}. By transport relations, \Cref{MSWformula} holds as follows:
$$\zeta_{<N+1}(\bm{k})=Z_{N}(\bm{k}\ |)=\cdots=Z_{N}(k_{1},\ldots,k_{r-1}\ |\ k_{r})=\cdots=Z_{N}(|\ \bm{k})=\zeta_{<N+1}^{\flat}(\bm{k}).$$
In the next subsection, we aim to extend above \Cref{Lem:MSW4.1} and \Cref{Lem:MSW4.2} to their $q$-analogue.

\subsection{Proof of \Cref{main thm 1}}
For $m,n\in\ZZ_{\geq0}$ with $m\leq n$, we define
$$\binom{n}{m}_{q}=\frac{[n]_{q}!}{[m]_{q}![n-m]_{q}!},\quad[n]_{q}!=\prod_{j=1}^n[j]_{q},\quad [0]_{q}!=1.$$
\begin{lem}
  For non-negative integers $m,n$ satisfying $m\leq n\leq N$, we have
  \begin{align}
    \label{1}\frac{q^m}{[m]_{q}}C^{q}_{N}(m,n)&=\sum_{m\leq b\leq n}C^{q}_{N}(m,b)\frac{q^b}{[b]_{q}}\quad (m>0),\\
    \label{2}\sum_{m<a\leq n}\frac{1}{[a]_{q}}C^{q}_{N}(a,n)&=\sum_{m\leq b<n}C^{q}_{N}(m,b)\frac{1}{[N-b]_{q}}\quad (m<n),
  \end{align}
      where
    $$C^{q}_{N}(m,n)=\left. \binom{n}{m}_{q} \middle/ \binom{N}{m}_{q}. \right.$$
\end{lem}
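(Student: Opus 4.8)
The plan is to establish the two identities \eqref{1} and \eqref{2} as direct $q$-deformations of the identities in \Cref{Lem:MSW4.1}, by translating each into a statement about $q$-binomial coefficients and then invoking the $q$-Pascal recursion together with a telescoping argument. First I would rewrite everything in terms of $q$-binomials alone: since $C^{q}_{N}(m,n)=\binom{n}{m}_{q}/\binom{N}{m}_{q}$ and the denominator $\binom{N}{m}_{q}$ does not depend on the summation variable, both \eqref{1} and \eqref{2} reduce to identities among the numerators. Concretely, \eqref{1} becomes
\begin{align*}
\frac{q^{m}}{[m]_{q}}\binom{n}{m}_{q}=\sum_{m\leq b\leq n}\frac{q^{b}}{[b]_{q}}\binom{b}{m}_{q},
\end{align*}
and similarly, after clearing the common factor $\binom{N}{m}_{q}$ on the left against $\binom{N}{a}_{q}$ on the right (using $[a]_{q}^{-1}\binom{N}{a}_{q}^{-1}$ versus $[N-b]_{q}^{-1}\binom{N}{b}_{q}^{-1}$), \eqref{2} becomes a telescoping identity that I will want to phrase as
\begin{align*}
\sum_{m<a\leq n}\frac{1}{[a]_{q}}\binom{n}{a}_{q}\binom{N}{m}_{q}^{-1}\binom{N}{a}_{q}=\sum_{m\leq b<n}\binom{b}{m}_{q}\frac{1}{[N-b]_{q}}\binom{N}{b}_{q}^{-1}\binom{N}{m}_{q}.
\end{align*}
The cleanest route, though, is not to juggle the $N$-denominators but to prove the pure-numerator identities first and then divide.

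For \eqref{1}, the key observation is the $q$-Pascal-type recursion $\frac{q^{b}}{[b]_{q}}\binom{b}{m}_{q}=\frac{q^{m}}{[m]_{q}}\left(\binom{b}{m}_{q}-q^{?}\binom{b-1}{m}_{q}\right)$ for a suitable power of $q$; more precisely, using $\binom{b}{m}_{q}=\frac{[b]_{q}}{[m]_{q}}\binom{b-1}{m-1}_{q}$ one finds that the summand on the right telescopes. I would set $f(b)=\binom{b}{m}_{q}$ (times an appropriate power of $q$) so that the partial sums collapse, leaving only the boundary term at $b=n$, which is exactly $\frac{q^{m}}{[m]_{q}}\binom{n}{m}_{q}$ after the $b=m-1$ (or $b=m$) term vanishes. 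The identity $[b]_{q}-q\,[b-1]_{q}=1$, equivalently $[b]_{q}=1+q[b-1]_{q}$, is the arithmetic fact that drives the telescoping; one should double-check the exact $q$-powers here since a shift by one in the exponent is easy to get wrong. For \eqref{2}, I would similarly look for a function $g(b)$ with $g(b)-g(b-1)$ equal (up to the $N$-denominators) to the summand $\frac{1}{[a]_{q}}C^{q}_{N}(a,n)$ reindexed; the natural candidate is $g(b)=C^{q}_{N}(m,b)\cdot(\text{something})$ and the needed arithmetic identity is the complementary one $[N-b]_{q}+q^{N-b}[b]_{q}=[N]_{q}$ (or a close variant), reflecting the symmetry $\binom{N}{b}_{q}=\binom{N}{N-b}_{q}$.

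The main obstacle will be bookkeeping the powers of $q$ correctly: unlike the classical case, every $q$-Pascal step introduces a factor $q^{\text{something}}$, and the two recursions $\binom{n}{m}_{q}=\binom{n-1}{m-1}_{q}+q^{m}\binom{n-1}{m}_{q}$ and $\binom{n}{m}_{q}=q^{n-m}\binom{n-1}{m-1}_{q}+\binom{n-1}{m}_{q}$ differ precisely in where that factor sits, so I must pick the right one for each identity. I expect \eqref{1} to fall out from the first form (the $q^{m}$ there matches the $q^{m}$ in the left-hand side of \eqref{1}) and \eqref{2} from the second form (the $q^{n-m}$ there pairs with the $[N-b]_{q}$ factor). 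Once the $q$-powers are nailed down, both proofs are one-line telescopings; a sanity check is that setting $q\to1$ recovers \Cref{Lem:MSW4.1} verbatim, and that the $m=n$ (resp.\ degenerate) boundary cases of the sums are consistent.
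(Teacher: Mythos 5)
Your proposal is correct and is essentially the paper's own argument: both identities are reduced to one-step difference identities for the connector, namely $\frac{q^m}{[m]_q}\bigl(C^q_N(m,b)-C^q_N(m,b-1)\bigr)=C^q_N(m,b)\frac{q^b}{[b]_q}$ and $\frac{1}{[a]_q}\bigl(C^q_N(a,b+1)-C^q_N(a,b)\bigr)=\bigl(C^q_N(a-1,b)-C^q_N(a,b)\bigr)\frac{1}{[N-b]_q}$, which are verified by direct computation with $q$-factorials and then telescoped (a double telescoping, in both $a$ and $b$, for the second identity). The undetermined $q$-power you flag turns out to be trivial in \eqref{1} (no extra factor on $\binom{b-1}{m}_q$), and the arithmetic fact driving both computations is exactly the identity $[x]_q-[y]_q=q^{y}[x-y]_q$ that you name.
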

  
\begin{proof}
  It is sufficient to prove
 $$\frac{q^m}{[m]_{q}}\Big(C^{q}_{N}(m,b)-C^{q}_{N}(m,b-1)\Big)=C^{q}_{N}(m,b)\frac{q^b}{[b]_{q}},\quad 0<m<b\leq n,$$
  and
  $$\frac{1}{[a]_{q}}\Big(C^{q}_{N}(a,b+1)-C^{q}_{N}(a,b)\Big)=\Big(C^{q}_{N}(a-1,b)-C^{q}_{N}(a,b)\Big)\frac{1}{[N-b]_{q}},\quad m<a\leq b<n.$$
  First, for $0<m<b\leq n$, we have
  \begin{alignat}{3}
    \frac{q^m}{[m]_{q}}\Big(C^{q}_{N}(m,b)-C^{q}_{N}(m,b-1)\Big)&=\frac{q^m}{[m]_{q}}\Bigg(\frac{[b]_{q}!}{[m]_{q}![b-m]_{q}!}\cdot\binom{N}{m}_{q}^{-1}-\frac{[b-1]_{q}!}{[m]_{q}![b-1-m]_{q}!}\cdot\binom{N}{m}_{q}^{-1}\Bigg)\\
    &=\frac{q^m}{[m]_{q}}\binom{b}{m}_{q}\binom{N}{m}_{q}^{-1}\Bigg(1-\frac{[b-m]_{q}}{[b]_{q}}\Bigg)\\
    &=C_{N}^{q}(m,b)\frac{q^b}{[b]_{q}}.
  \end{alignat}
  Second, for $m<a\leq b<n$ we have
  \begin{alignat}{2}
    \frac{1}{[a]_{q}}\Big(C^{q}_{N}(a,b+1)-C^{q}_{N}(a,b)\Big)&=C_{N}^{q}(a,b)\frac{q^{b-a+1}}{[b-a+1]_{q}},\\
    \Big(C^{q}_{N}(a-1,b)-C^{q}_{N}(a,b)\Big)\frac{1}{[N-b]_{q}}&=C_{N}^{q}(a,b)\frac{q^{b-a+1}}{[b-a+1]_{q}}.
  \end{alignat}
\end{proof}
Next, for $\bm{k}=(k_{1},\ldots,k_{r})$ and $\bm{l}=(l_{1},\ldots,l_{s})$, we define $q$-analogue of $Z_{N}(\bm{k}\ |\ \bm{l})$ as
  \begin{alignat}{2}
    Z^{q}_{N}(\bm{k}\ |\ \bm{l})=\sum_{\substack{0<m_{1}<\cdots<m_{r}\leq n_{11}\\ n_{j1}\leq\cdots\leq n_{jl_{j}}\leq n_{(j+1)1}(1\leq j<s)\\ n_{(j+1)1}<n_{(j+1)2}(1\leq j<s)\\ n_{s2}\leq\cdots\leq n_{sl_{s}}<N}}&\frac{q^{(k_{1}-1)m_{1}+\cdots+(k_{r}-1)m_{r}}}{[m_{1}]_{q}^{k_{1}}\cdots [m_{r}]_{q}^{k_{r}}}\\
    &\times C^{q}_{N}(m_{r},n_{11})\prod_{j=1}^{s}\frac{q^{n_{j2}+\cdots+n_{jl_{j}}}}{[N-n_{j1}]_{q}[n_{j2}]_{q}\cdots [n_{jl_{j}}]_{q}},
  \end{alignat}
and the boundary condition is
$$Z_{N}^{q}(\bm{k}\ |)=\zeta_{<N+1}^{BZ}(\bm{k}),\ Z_{N}^{q}(|\ \bm{l})=\zeta_{<N+1}^{q\flat}(\bm{l}).$$
\begin{lem}[transport relations]\label{my lem 1}
 For any $k\in\ZZ_{>0}$ and indices $\bm{k},\ \bm{l}$, we have
  \begin{alignat}{3}
    Z^{q}_{N}(\bm{k},k\ |\ \bm{l})&=Z^{q}_{N}(\bm{k}\ |\ k,\bm{l}),\\
    Z^{q}_{N}(\bm{k},k\ |)&=Z^{q}_{N}(\bm{k}\ |\ k),\\
    Z^{q}_{N}(k\ |\ \bm{l})&=Z^{q}_{N}(|\ k,\bm{l}).
  \end{alignat}
       
\end{lem}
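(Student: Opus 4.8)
The plan is to prove the three transport relations of \Cref{my lem 1} in direct analogy with the classical argument for \Cref{Lem:MSW4.2}, using the $q$-deformed identities \eqref{1} and \eqref{2} as the engine. The starting observation is that each relation isolates a single new part $k$ attached at the junction between the ``$\zeta^{BZ}$-side'' (governed by strict inequalities $0<m_1<\cdots<m_r$ and factors $q^{(k_i-1)m_i}/[m_i]_q^{k_i}$) and the ``$q\flat$-side'' (governed by the $n_{j\ell}$ with one strict step $n_{(j+1)1}<n_{(j+1)2}$ per block and factors $q^{n_{j\ell}}/[n_{j\ell}]_q$). Transporting $k$ across the bar means peeling off, one at a time, the $k-1$ ``interior'' summation variables that carry a bare $1/[n]_q$ and the single ``boundary'' variable that carries $1/[N-n]_q$. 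Concretely, I would write $Z^q_N(\bm{k},k\mid \bm{l})$ with its summand, single out the variables $m_r$ (the last part on the left, with exponent $k-1$ on $q^{m_r}$ and power $k$ on $[m_r]_q$) and the fresh chain $n_{01}\le n_{02}\le\cdots\le n_{0k}$ that would appear on the right as the new first block of length $k$, and show step by step that summing produces exactly the right-hand summand.

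First I would treat the case $k=1$, which is the cleanest: here $Z^q_N(\bm{k},1\mid\bm{l})$ has $m_r$ appearing only through $C^q_N(m_r,n_{11})$ and a factor $1/[m_r]_q$ with $q$-exponent $0$ (since $k-1=0$), and we must match $Z^q_N(\bm{k}\mid 1,\bm{l})$, where the new block of length one contributes $1/[N-n_{01}]_q$ with $m_{r-1}<m_r\le n_{01}$, $n_{01}<n_{11}$ (the strict step for the new block), then $C^q_N(m_{r-1},n_{01})$... wait, more precisely after the transport the connector attaches $m_r$ (now the last left-part with weight unchanged except we removed the trailing $1$) — I would instead match using identity \eqref{2}: the sum $\sum_{m_{r-1}<m_r\le n_{11}} \frac{1}{[m_r]_q} C^q_N(m_r,n_{11})$ equals $\sum_{m_{r-1}\le n_{01}<n_{11}} C^q_N(m_{r-1},n_{01})\frac{1}{[N-n_{01}]_q}$, which is precisely the reindexing that turns $Z^q_N(\bm{k},1\mid\bm{l})$ into $Z^q_N(\bm{k}\mid 1,\bm{l})$ once one checks the inequality ranges line up (the strict $m_{r-1}<m_r$ on the left becomes $m_{r-1}\le n_{01}$ with $n_{01}<n_{11}$ on the right, matching the $n$-chain conditions). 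The third relation $Z^q_N(k\mid\bm{l})=Z^q_N(\mid k,\bm{l})$ is the same computation with the degenerate boundary $\bm{k}=\varnothing$ on the left, using the boundary condition $Z^q_N(\mid \cdot)=\zeta^{q\flat}_{<N+1}(\cdot)$; the second relation $Z^q_N(\bm{k},k\mid)=Z^q_N(\bm{k}\mid k)$ is the special case $\bm{l}=\varnothing$ and follows from the general first relation together with the boundary conditions, so it needs no separate argument.

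For general $k\ge 2$ the mechanism is an induction on $k$ internal to this proof: peeling off one unit of $k$ at a time. The key is that \eqref{1} lets one rewrite $\frac{q^{m_r}}{[m_r]_q}\,C^q_N(m_r,n_{11})$ — the factor carrying the ``innermost'' surplus of the repeated part — as $\sum_{m_r\le b\le n_{11}} C^q_N(m_r,b)\frac{q^b}{[b]_q}$, which introduces a new interior variable $b$ with exactly the weight $q^b/[b]_q$ demanded by an extra slot in the $q\flat$-block; iterating this $k-1$ times converts the power $[m_r]_q^{k}$ with prefactor $q^{(k-1)m_r}$ into $[m_r]_q$ times a chain $m_r\le b_2\le\cdots\le b_k$ of interior $q\flat$-weights, and then one application of \eqref{2} moves the last remaining $1/[m_r]_q\cdot C^q_N$ across the bar exactly as in the $k=1$ case, producing the $1/[N-n_{01}]_q$ boundary factor of the new block. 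So the skeleton is: (i) apply \eqref{1} repeatedly to generate the interior $q^{n}/[n]_q$ factors of the new length-$k$ block, being careful that each application respects $m_r\le b_2\le\cdots\le b_k\le n_{11}$, i.e.\ the weakly-increasing chain with the correct endpoints; (ii) apply \eqref{2} once to convert the surviving connector-times-$1/[m_r]_q$ into the $1/[N-n]_q$ head of the block and simultaneously transmute the strict inequality $m_{r-1}<m_r$ into $m_{r-1}\le n_{01}$ with the strict step $n_{01}<(\text{next})$; (iii) match the resulting expression termwise with the definition of $Z^q_N(\bm{k}\mid k,\bm{l})$. I expect the main obstacle to be purely bookkeeping: verifying that the inequality ranges on the summation indices transform correctly under each application of \eqref{1} and \eqref{2} — in particular that the single strict step per $q\flat$-block, the weak chains within a block, and the weak link $n_{jl_j}\le n_{(j+1)1}$ between blocks all emerge with the right strict/weak pattern after the transport — rather than the algebra of the $q$-weights, which is handled cleanly and mechanically by \eqref{1} and \eqref{2}.
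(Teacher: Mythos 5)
Your proposal is correct and follows exactly the paper's own route: the paper's proof of this lemma is the one-line statement that the relations follow from the definition of $Z^q_N(\bm{k}\mid\bm{l})$ by applying \eqref{1} $k-1$ times and \eqref{2} once, which is precisely the peel-off-interior-variables-then-move-the-boundary mechanism you describe. Your expanded bookkeeping of the inequality ranges and the degenerate cases $\bm{l}=\varnothing$, $\bm{k}=\varnothing$ is consistent with the definitions and adds detail the paper omits.
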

\begin{proof}
 By using \eqref{1} $k-1$ times and \eqref{2}, it holds from the definition of $Z_{N}^{q}(\bm{k}\ |\ \bm{l})$.
\end{proof}

\begin{proof}[Proof of \Cref{main thm 1}]
  By \Cref{my lem 1}, we have
  \begin{alignat}{4}
    \zeta_{<N+1}^{BZ}(\bm{k})&=Z_{N}^{q}(k_{1},\ldots,k_{r}\ |)\\
    &=Z_{N}^{q}(k_{1},\ldots,k_{r-1}\ |\ k_{r})\\
    &\vdots\\
    &=Z_{N}^{q}(|\ k_{1},\ldots,k_{r})=\zeta_{<N+1}^{q\flat}(\bm{k}).
  \end{alignat}
This gives a proof.
\end{proof}

\begin{proof}[Proof of \Cref{Thm:Hessami}]
  Since $\zeta_{N}$ satisfies $\zeta_{N}^{N}=1$, we obtain
  $$\dfrac{1}{[N-n_{j1}]_{\zeta_{N}}}=-\frac{\zeta_{N}^{n_{j1}}}{[n_{j1}]_{\zeta_{N}}},\quad j=1,\ldots,r.$$
  Hence we have the desired conclusion by \Cref{main thm 1}.
\end{proof}

\section{Extension of Yamamoto's work}
\subsection{Notation}
In this subsection, we review some notations in Yamamoto \cite{MSWyamamoto}. For a diagonally constant index $\bm{k}$ on Young diagram $D$, we set
\begin{alignat}{3}
  D_{p}&=\{(i,j)\in D\ |\ i-j=p\},\\
  p_{0}&=\min\{p\ |\ D_{p}\neq\emptyset\},\ p_{1}=\max\{p\ |\ D_{p}\neq\emptyset\},\\
  k_{p}&=k_{ij}\ \text{for}\ p_{0}\leq p\leq p_{1}\ \text{and}\ (i,j)\in D,\\
  [a,b]&=\{x\in\ZZ\ |\ a\leq x\leq b\},\ a,b\in\ZZ.
\end{alignat}
Note that, for $p\in\{p_{0},\ldots,p_{1}\}$, there is a bijection
$$D_{p}\longrightarrow J_{p}=\{j\in\ZZ\ |\ (j+p,j)\in D_{p}\}\quad (i,j)\mapsto j.$$
Let $(J,J^\prime)$ be a pair of intervals in $\ZZ$. If $J=[j_{0},j_{1}]$, we say a pair $(J,J^\prime)$ is \textit{consecutive} if $J^\prime=J$ or $J\setminus\{j_{1}\}$ or $J\cup\{j_{0}-1\}$ or $J\setminus\{j_{1}\}\cup\{j_{0}-1\}$. For $\bm{m}=(m_{j})\in[1,N-1]^{J}$ with $j\in J$, we define $\prod(\bm{m})=\prod_{j\in J}m_{j}$. 
\begin{dfn}[cf. \cite{MSWyamamoto}]
  Let $(J,J^\prime)$ be a consecutive pair of intervals and $\bm{m}=(m_{j})_{j\in J},\ \bm{n}=(n_{j})_{j\in J^\prime}$ be tuples of integers. Then we define 
$$\bm{m}\inle\bm{n}\xLeftrightarrow[]{\text{def}}\begin{cases}
 m_{j}<n_{j}\quad &\mathrm{if}\  j\in J\ \mathrm{and}\ j\in J^\prime, \\
 n_{j-1}\leq m_{j}\quad &\mathrm{if}\  j\in J\ \mathrm{and}\ j-1\in J^\prime.
\end{cases}$$
$$\bm{m}\inleq\bm{n}\xLeftrightarrow[]{\text{def}}\begin{cases}
 m_{j}\leq n_{j}\quad &\mathrm{if}\  j\in J\ \mathrm{and}\ j\in J^\prime, \\
 n_{j-1}< m_{j}\quad &\mathrm{if}\  j\in J\ \mathrm{and}\ j-1\in J^\prime.
\end{cases}$$
\end{dfn}
For a diagonally constant index $\bm{k}$ and $N>0$, Yamamoto (\cite{MSWyamamoto}) defined $\flat$-sum of Schur type as bellow:
$$\zeta_{<N}^{\flat}(\bm{k})=\sum_{\substack{\bm{n}_{p}^{(l)}\in[1,N-1]^{J_{p}}\\ \bm{n}_{p}^{(l)}\inleq \bm{n}_{p}^{(l+1)}(1\leq l<k_{p})\\ \bm{n}_{p}^{(k_{p})}\inle\bm{n}_{p+1}^{(1)}(p_{0}\leq p<p_{1})}}\prod_{p=p_{0}}^{p_{1}}\frac{1}{\prod(\bm{N}-\bm{n}_{p}^{(1)})\prod(\bm{n}_{p}^{(2)})\cdots\prod(\bm{n}_{p}^{(k_{p})})},$$
where, $\bm{N}-\bm{n}=(N-n_{j})_{j\in J}$. For non-negative integers $m,n$, a connector $C_{N}(m,n)$ is defined by
$$C_{N}(m,n)=\left. \binom{n}{m} \middle/ \binom{N-1}{m}. \right.$$
\begin{lem}[\cite{MSWyamamoto}, Lemma 3.8]\label{Lem:Yam3.8}
  $\mathrm{(1)}$ For $0<m<N$ and $0\leq n\leq n^\prime<N$, it holds
  \begin{align}
    \label{Lem:3.8(1)}\frac{1}{m}(C_{N}(m,n^\prime)-C_{N}(m,n))=\sum_{b=n+1}^{n^\prime}C_{N}(m,b)\frac{1}{b}.
    \end{align}
  $\mathrm{(2)}$ For $0\leq m\leq m^\prime$ and $0<n<N$, it holds
  \begin{align}
    \label{Lem:3.8(2)}\sum_{a=m+1}^{m^\prime}C_{N}(a,n)\frac{1}{n}=(C_{N}(m,n-1)-C_{N}(m^\prime,n-1))\frac{1}{N-n}.
    \end{align}
\end{lem}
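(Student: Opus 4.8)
The plan is to prove both identities by \emph{telescoping} the finite sum, reducing each to a one-step identity relating consecutive values of the summation index, and then verifying that one-step identity by an elementary manipulation of binomial coefficients. This mirrors the argument behind \Cref{Lem:MSW4.1}, which in turn feeds \Cref{Lem:MSW4.2}; here the only change is that the connector $C_{N}(m,n)=\binom{n}{m}/\binom{N-1}{m}$ carries $N-1$ in place of $N$, which affects nothing in the manipulation.

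For part (1), fix $m$ with $0<m<N$. First I would prove the single-step relation
\[
\frac{1}{m}\bigl(C_{N}(m,b)-C_{N}(m,b-1)\bigr)=C_{N}(m,b)\,\frac{1}{b}\qquad(b\ge 1).
\]
Since $C_{N}(m,b)=\binom{b}{m}/\binom{N-1}{m}$, the common factor $\binom{N-1}{m}^{-1}$ cancels and this reduces to $\frac{1}{m}\bigl(\binom{b}{m}-\binom{b-1}{m}\bigr)=\frac{1}{b}\binom{b}{m}$, which follows from Pascal's rule $\binom{b}{m}-\binom{b-1}{m}=\binom{b-1}{m-1}$ together with the absorption identity $\frac{1}{m}\binom{b-1}{m-1}=\frac{1}{b}\binom{b}{m}$ (read off from the factorial formula, and trivially true when $b<m$ since both sides vanish). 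Summing this relation over $b=n+1,\ldots,n^{\prime}$ makes the left-hand side telescope to $\frac{1}{m}\bigl(C_{N}(m,n^{\prime})-C_{N}(m,n)\bigr)$, which is the claim.

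For part (2), fix $n$ with $0<n<N$. The relevant single-step relation is
\[
C_{N}(a,n)\,\frac{1}{n}=\bigl(C_{N}(a-1,n-1)-C_{N}(a,n-1)\bigr)\,\frac{1}{N-n},
\]
and summing over $a=m+1,\ldots,m^{\prime}$ makes the right-hand side telescope in the variable $a$ to $\bigl(C_{N}(m,n-1)-C_{N}(m^{\prime},n-1)\bigr)\frac{1}{N-n}$. To establish the single-step relation I would put the right-hand side over the common denominator $\binom{N-1}{a}$ via $\binom{N-1}{a-1}=\frac{a}{N-a}\binom{N-1}{a}$, and expand the numerators using $\binom{n-1}{a-1}=\frac{a}{n}\binom{n}{a}$ and $\binom{n-1}{a}=\frac{n-a}{n}\binom{n}{a}$; the bracket then collapses to $\bigl(\frac{N-a}{n}-\frac{n-a}{n}\bigr)\binom{n}{a}=\frac{N-n}{n}\binom{n}{a}$, the factor $N-n$ cancels, and what remains is exactly $\frac{1}{n}C_{N}(a,n)$.

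I do not expect a genuine obstacle: both statements are finite telescoping sums over quotients of binomial coefficients. The two points that need attention are purely bookkeeping. First, one must track the degenerate endpoints — e.g. the case $m=0$ in (2), where $C_{N}(0,n-1)=1$ and the sum starts at $a=1$, and the (automatic) fact that in (1) only the terms with $b\ge m$ contribute since $\binom{b}{m}=0$ for $b<m$. Second, and slightly less transparent, in (2) \emph{both} arguments of the connector drop by one at once, so one has to recognise that the telescoping cancellation occurs only in the first argument $a$, with the second argument frozen at $n-1$. Conceptually, (1) and (2) are the discrete analogues of the two elementary operations $\int\frac{dt}{t}$ and $\int\frac{dt}{1-t}$ applied to the kernel $C_{N}$, which is precisely why they drive the Schur-type transport relations, just as \Cref{Lem:MSW4.1} drives \Cref{Lem:MSW4.2}.
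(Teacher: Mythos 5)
Your proof is correct, and it is essentially the same argument the paper uses for the $q$-analogue (\Cref{my lem 2}): reduce each identity to the one-step relations $\frac{1}{m}(C_{N}(m,b)-C_{N}(m,b-1))=C_{N}(m,b)\frac{1}{b}$ and $(C_{N}(a-1,n-1)-C_{N}(a,n-1))\frac{1}{N-n}=C_{N}(a,n)\frac{1}{n}$, verify these by direct binomial manipulation, and telescope. Your bookkeeping of the degenerate endpoints ($b<m$, $m=0$, $C_{N}(m,N-1)=1$) matches the conventions the paper imposes on the connector.
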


\begin{dfn}[cf. \cite{MSWyamamoto}]\label{Yam:def}
  Let $(J,J^\prime)$ be a consecutive pair of intervals, and $\bm{m}=(m_{j})_{j\in J},\ \bm{n}=(n_{j})_{j\in J^\prime}$ be tuples of non-negative integers. We set $\widetilde{J}=J\cup J^\prime$ and define the symbols $\widetilde{m}_{j},\ \widetilde{n}_{j}$ for $j\in\widetilde{J}$ by 
  $$\widetilde{m}_{j}=\begin{cases}
    m_{j}\quad &\mathrm{if}\ j\in J,\\
    0\quad &\mathrm{if}\ j\notin J,
  \end{cases}\quad \widetilde{n}_{j}=\begin{cases}
    n_{j}\quad &\mathrm{if}\ j\in J^\prime,\\
    N-1\quad &\mathrm{if}\ j\notin J^\prime.
  \end{cases}$$
  Then we define
  $$D_{N}(\bm{m},\bm{n})=\det(C_{N}(\widetilde{m}_{j_{1}},\widetilde{n}_{j_{2}}))_{j_{1},j_{2}\in\widetilde{J}}\ .$$
\end{dfn}
For an interval $J$, we say a tuple $\bm{m}=(m_{j})_{j\in J}$ is \textit{non-decreasing} when $m_{j}\leq m_{j+1}$ holds for $j,\ j+1\in J$. 
\begin{lem}[\cite{MSWyamamoto}, Lemma 3.11]\label{Lem:Yam3.11}
  Let $(J,J^\prime)$ be a consecutive pair of intervals.\\
  $\mathrm{(1)}$ If $\bm{m}\in[1,N-1]^{J}$ and $\bm{n}\in[0,N-1]^{J^\prime}$ are non-decreasing, it holds
  $$\frac{1}{\prod(\bm{m})}D_{N}(\bm{m},\bm{n})=\sum_{\substack{\bm{b}\in[1,N-1]^{J}\\ \bm{b}\inleq \bm{n}}}D_{N}(\bm{m},\bm{b})\frac{1}{\prod(\bm{b})}.$$
  $\mathrm{(2)}$ If $\bm{m}\in[0,N-1]^{J}$ and $\bm{n}\in[1,N-1]^{J^\prime}$ are non-decreasing, it holds
  $$\sum_{\substack{\bm{a}\in[1,N-1]^{J^\prime}\\ \bm{m}\inle\bm{a}}}D_{N}(\bm{a},\bm{n})\frac{1}{\prod(\bm{n})}=D_{N}(\bm{m},\bm{n-1})\frac{1}{\prod(\bm{N-n})},$$
  where $(\bm{n}-\bm{1})=(n_{j}-1)_{j\in J^\prime}$.
\end{lem}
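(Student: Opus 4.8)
The plan is to derive both identities of \Cref{Lem:Yam3.11} from the one-variable relations \eqref{Lem:3.8(1)} and \eqref{Lem:3.8(2)} by exploiting the multilinearity of the determinant $D_{N}$ of \Cref{Yam:def}. Write $J=[j_{0},j_{1}]$; by definition of a consecutive pair, $J^{\prime}$ is one of $[j_{0},j_{1}]$, $[j_{0},j_{1}-1]$, $[j_{0}-1,j_{1}]$, $[j_{0}-1,j_{1}-1]$, so $\widetilde{J}=J\cup J^{\prime}$ is again an interval, differing from $J$ at most by the single extra element $j_{0}-1$. Throughout I will use the elementary evaluations $C_{N}(0,n)=1$, $C_{N}(m,N-1)=1$, $C_{N}(m,0)=0$ for $m>0$, and $C_{N}(N-1,n)=0$ for $0\le n<N-1$, which control the padded rows (index $j\notin J$) and columns (index $j\notin J^{\prime}$) in $D_{N}$; I write $\bm{1}$ for an all-ones vector and use $r,c$ for the row and column indices of the matrices below. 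The non-decreasing hypotheses on $\bm{m}$ and $\bm{n}$ enter only to ensure that the endpoints occurring in the applications of \Cref{Lem:Yam3.8} appear in the correct order.

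For part (1), in $D_{N}(\bm{m},\bm{b})$ both arguments are indexed by $J$, so this is the ordinary $|J|\times|J|$ determinant $\det(C_{N}(m_{r},b_{c}))_{r,c\in J}$. I would push the factor $1/b_{c}$ from $1/\prod(\bm{b})$ into the $c$-th column. The key point is that $\bm{b}\inleq\bm{n}$ compares $\bm{b}$ only with the fixed tuple $\bm{n}$, never with itself, so the range of summation factors coordinatewise: each $b_{c}$ runs independently over an interval $(\ell_{c},u_{c}]$ of integers, where $u_{c}$ is $n_{c}$ or the ambient bound $N-1$ and $\ell_{c}$ is $n_{c-1}$ or the ambient bound $0$, according to which of $c$ and $c-1$ lie in $J^{\prime}$. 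Multilinearity and \eqref{Lem:3.8(1)} then give
\begin{align*}
\sum_{\substack{\bm{b}\in[1,N-1]^{J}\\ \bm{b}\inleq\bm{n}}}\frac{D_{N}(\bm{m},\bm{b})}{\prod(\bm{b})}
&=\det\!\left(\sum_{t=\ell_{c}+1}^{u_{c}}\frac{C_{N}(m_{r},t)}{t}\right)_{r,c\in J}\\
&=\det\!\left(\frac{C_{N}(m_{r},u_{c})-C_{N}(m_{r},\ell_{c})}{m_{r}}\right)_{r,c\in J},
\end{align*}
where, if $\ell_{c}=0$, the subtracted term is $C_{N}(m_{r},0)=0$ because $m_{r}\ge1$. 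Factoring $1/m_{r}$ out of row $r$ produces the prefactor $1/\prod(\bm{m})$, leaving the identity $\det\big(C_{N}(m_{r},u_{c})-C_{N}(m_{r},\ell_{c})\big)_{r,c\in J}=D_{N}(\bm{m},\bm{n})$. Here the differences telescope under successive column additions; depending on whether $j_{0}-1\in J^{\prime}$ and whether $j_{1}\in J^{\prime}$, the result is the determinant of the columns $\big(C_{N}(m_{r},n_{c})\big)_{r}$ in their natural order, up to a uniform shift $\big({-}C_{N}(m_{r},n_{j_{0}-1})\big)_{r}$ of every column (when $j_{0}-1\in J^{\prime}$) and/or replacement of the last column by the all-ones vector $\bm{1}$ (when $j_{1}\notin J^{\prime}$, so $u_{j_{1}}=N-1$). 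In each of the four cases this is recognised as $D_{N}(\bm{m},\bm{n})$, after — when $j_{0}-1\in J^{\prime}$ — column-reducing the padded all-ones row $j_{0}-1$ of $D_{N}$ and expanding along it.

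Part (2) is the dual argument, with the shift $\bm{n}\mapsto\bm{n}-\bm{1}$ produced by \eqref{Lem:3.8(2)}. I would push $1/n_{c}$ from $1/\prod(\bm{n})$ into the columns of $D_{N}(\bm{a},\bm{n})=\det(C_{N}(a_{r},n_{c}))_{r,c\in J^{\prime}}$, use that $\bm{m}\inle\bm{a}$ confines each $a_{r}$ ($r\in J^{\prime}$) independently to an interval $(\lambda_{r},\mu_{r}]$ whose endpoints are read off from $\bm{m}$, sum each row over its range by \eqref{Lem:3.8(2)} to obtain entries $\big(C_{N}(\lambda_{r},n_{c}-1)-C_{N}(\mu_{r},n_{c}-1)\big)/(N-n_{c})$, pull the factors $1/(N-n_{c})$ out of the columns to form $1/\prod(\bm{N}-\bm{n})$, and finally collapse the telescoping \emph{row} differences by row operations — using $C_{N}(N-1,n_{c}-1)=0$ for the row attached to an endpoint $\mu_{r}=N-1$, and the all-ones evaluations for the padded row and column — to arrive at $D_{N}(\bm{m},\bm{n}-\bm{1})$.

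The algebra above is routine; the step I expect to cost the most care is the final determinant-matching in each part, namely verifying that the telescoped pattern of differences lines up exactly with the padded structure of $D_{N}$ in all four shapes of the consecutive pair $(J,J^{\prime})$, and that the degenerate situations are consistent — an empty summation range for some $b_{c}$ or $a_{r}$, or a coincidence $n_{c-1}=n_{c}$, each of which simply forces a repeated row or column so that both sides vanish at once. Once the decoupling of $\inleq$ and $\inle$ into one-dimensional ranges is in place, the whole lemma reduces to applying \Cref{Lem:Yam3.8} coordinate by coordinate inside the determinant.
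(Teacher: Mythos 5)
Your argument is correct. The paper itself states this lemma without proof, importing it verbatim from Yamamoto's work, so there is no in-paper proof to compare against; your derivation --- decoupling the conditions $\inleq$ and $\inle$ into independent one-dimensional ranges, applying \Cref{Lem:Yam3.8} entrywise via multilinearity of the determinant, and then matching the telescoped column/row differences against the padded structure of $D_{N}$ in each of the four shapes of the consecutive pair $(J,J^{\prime})$ --- is exactly the standard argument of the cited source, and it is the same mechanism the paper later invokes when it asserts that the $q$-analogue holds ``by the same method as Lemma 3.11''.
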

  Now we define a connected sum
\begin{alignat}{2}
  Z(\bm{k};a)=\sum_{\substack{\bm{m}_{p}(p_{0}\leq p \leq a)\\ \bm{n}_{p}^{(l)}(a<p\leq p_{1})}}\Bigg(\prod_{p=p_{0}}^{a}\frac{1}{\prod(\bm{m}_{p})^{k_{p}}}\Bigg)\cdot &D_{N}(\bm{m}_{a},\bm{n}_{a+1}^{(1)}\bm{-1})\\
  &\times\Bigg(\prod_{p=a+1}^{p_{1}}\frac{1}{\prod(\bm{N}\bm{-n}_{p}^{(1)})\prod(\bm{n}_{p}^{(2)})\cdots\prod(\bm{n}_{p}^{(k_{p})})}\Bigg).
  \end{alignat}
 Here $\bm{m}_{p}\in[1,N-1]^{J_{p}} (p_{0}\leq p\leq a)$ run satisfying $\bm{m}_{p_{0}}\inle\cdots\inle\bm{m}_{a}$, and $\bm{n}_{p}^{(l)}\in[1,N-1]^{J_{p}}(a<p\leq p_{1},\ 1\leq l\leq k_{p})$ run satisfying $\bm{n}_{p}^{(l)}\inleq\bm{n}_{p}^{(l+1)}(1\leq l<k_{p})$ and $\bm{n}_{p}^{(k_{p})}\inle\bm{n}_{p+1}^{(1)}(a<p<p_{1})$. By definition, boundary conditions are
$$Z(\bm{k};p_{1})=\zeta_{<N}(\bm{k}),\quad Z(\bm{k};p_{0}-1)=\zeta_{<N}^{\flat}(\bm{k}).$$
By applying \eqref{Lem:3.8(1)} $k_{a}$ times and \eqref{Lem:3.8(2)}, we get a transport relation
$$Z(\bm{k};a)=Z(\bm{k};a-1).$$
Then the \Cref{Thm:Yam3.6} holds as follows:
 \begin{alignat}{1}
 \zeta_{<N}(\bm{k})=Z(\bm{k};p_{1})=Z(\bm{k};p_{1}-1)=\cdots=Z(\bm{k};p_{0}-1)=\zeta_{<N}^{\flat}(\bm{k}). 
 \end{alignat} 

\subsection{Proof of \Cref{main thm 2}}
We aim to extend above \Cref{Lem:Yam3.8} and \Cref{Lem:Yam3.11} to their $q$-analogues. 

\begin{lem}\label{my lem 2}
   $\mathrm{(1)}$ For $0<m<N,$ and $0\leq n\leq n^\prime<N$, 
  \begin{align}
    \frac{q^m}{[m]_{q}}(C_{N}^{q}(m,n^\prime)-C_{N}^{q}(m,n))=\sum_{b=n+1}^{n^\prime}C_{N}^{q}(m,b)\frac{q^b}{[b]_{q}}. \label{3.5(1)} 
    \end{align}
  $\mathrm{(2)}$ For $0\leq m\leq m^\prime$ and $0<n<N$, 
  \begin{align}
    \sum_{a=m+1}^{m^\prime}C_{N}^{q}(a,n)\frac{q^{n-a}}{[n]_{q}}=(C_{N}^{q}(m,n-1)-C_{N}^{q}(m^\prime,n-1))\frac{1}{[N-n]_{q}}.\label{3.5(2)}
    \end{align}
  Here, $C_{N}^{q}(m,n)$ is
  $$C_{N}^{q}(m,n)=\left. \binom{n}{m}_{q} \middle/ \binom{N-1}{m}_{q}, \right.$$
  and $C_{N}^{q}(m,N-1)=1$, $C_{N}^{q}(m,n)=0$ if $m>n$.
\end{lem}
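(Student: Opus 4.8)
The plan is to follow the template of the proof of \eqref{1}--\eqref{2}: reduce each of the two displayed identities to a one-step (telescoping) identity between consecutive connectors, and then sum. The underlying $q$-binomial manipulations are insensitive to whether the denominator of the connector is $\binom{N}{m}_q$ or $\binom{N-1}{m}_q$, so essentially the same computations that proved \eqref{1} and \eqref{2} go through. The only arithmetic inputs needed are the elementary identities $[b]_q-[b-m]_q=q^{b-m}[m]_q$ and $[N-a]_q-[n-a]_q=q^{n-a}[N-n]_q$, both immediate from $[x]_q=(1-q^x)/(1-q)$.

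For (1), I would first establish the pointwise identity
\[
\frac{q^m}{[m]_q}\bigl(C_N^q(m,b)-C_N^q(m,b-1)\bigr)=C_N^q(m,b)\frac{q^b}{[b]_q}\qquad(m\ge 1,\ b\ge 1),
\]
exactly as before: factor $\binom{b}{m}_q-\binom{b-1}{m}_q=\binom{b}{m}_q\bigl(1-[b-m]_q/[b]_q\bigr)$, apply $[b]_q-[b-m]_q=q^{b-m}[m]_q$, and divide by $\binom{N-1}{m}_q$. For $b<m$ both sides vanish and for $b=m$ the identity reads $q^m/([m]_q\binom{N-1}{m}_q)=q^m/([m]_q\binom{N-1}{m}_q)$, so no hypothesis $b>m$ is needed. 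Summing over $b=n+1,\dots,n'$ telescopes the left side to $\frac{q^m}{[m]_q}\bigl(C_N^q(m,n')-C_N^q(m,n)\bigr)$, which is \eqref{3.5(1)} (the case $n=n'$ being the trivial empty sum).

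For (2), the one-step identity to prove is
\[
C_N^q(a,n)\frac{q^{n-a}}{[n]_q}=\bigl(C_N^q(a-1,n-1)-C_N^q(a,n-1)\bigr)\frac{1}{[N-n]_q}\qquad(1\le n<N,\ a\ge 1).
\]
I would verify it by combining three routine facts: from $\binom{n}{a}_q/\binom{n-1}{a}_q=[n]_q/[n-a]_q$ one gets $C_N^q(a,n)/[n]_q=C_N^q(a,n-1)/[n-a]_q$; from $\binom{n-1}{a-1}_q/\binom{n-1}{a}_q=[a]_q/[n-a]_q$ and $\binom{N-1}{a-1}_q/\binom{N-1}{a}_q=[a]_q/[N-a]_q$ one gets $C_N^q(a-1,n-1)=C_N^q(a,n-1)\,[N-a]_q/[n-a]_q$; and then $[N-a]_q-[n-a]_q=q^{n-a}[N-n]_q$ makes both sides equal to $C_N^q(a,n-1)\,q^{n-a}/[n-a]_q$. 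The degenerate cases ($a=n$, which reduces to $\binom{N-1}{n}_q[n]_q=\binom{N-1}{n-1}_q[N-n]_q$; and $a>n$, both sides $0$) are checked directly, so the identity holds for all $a\ge 1$. Summing over $a=m+1,\dots,m'$ telescopes the right side to $\bigl(C_N^q(m,n-1)-C_N^q(m',n-1)\bigr)/[N-n]_q$, giving \eqref{3.5(2)}.

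I do not anticipate a genuine obstacle: the whole argument is a bookkeeping exercise with $q$-binomial coefficients. The only spots needing care are tracking the powers of $q$ so that the $q$-shift attaches to the correct $q$-integer ($[m]_q$, $[b]_q$, $[n-a]_q$, or $[N-n]_q$) at each step, and making sure the boundary conventions $C_N^q(m,N-1)=1$ and $C_N^q(m,n)=0$ for $m>n$ are compatible with the endpoints of the telescoping sums. This lemma, together with a determinantal $q$-analogue of \Cref{Lem:Yam3.11}, will then yield the transport relation $Z(\bm{k};a)=Z(\bm{k};a-1)$ and hence \Cref{main thm 2}.
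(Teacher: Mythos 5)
Your proposal is correct and follows essentially the same route as the paper: both reduce each identity to the one-step relations $\frac{q^m}{[m]_q}(C_N^q(m,b)-C_N^q(m,b-1))=C_N^q(m,b)\frac{q^b}{[b]_q}$ and $(C_N^q(a-1,n-1)-C_N^q(a,n-1))\frac{1}{[N-n]_q}=C_N^q(a,n)\frac{q^{n-a}}{[n]_q}$, verify them by direct $q$-binomial manipulation, and telescope. Your treatment of the boundary cases ($b=m$ and $a=n$, where both sides are equal but nonzero) is in fact slightly more careful than the paper's remark that the degenerate cases hold because "both sides are $0$".
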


\begin{proof}
  First, it is sufficient to prove
  $$\frac{q^m}{[m]_{q}}\Big(C_{N}^{q}(m,b)-C_{N}^{q}(m,b-1)\Big)=C_{N}^{q}(m,b)\frac{q^b}{[b]_{q}}.$$
  If $m< b$, it is easily seen that
  \begin{alignat}{3}
    &\frac{q^m}{[m]_{q}}\Big(C_{N}^{q}(m,b)-C_{N}^{q}(m,b-1)\Big)\\
    &=\frac{q^m}{[m]_{q}}\Bigg(\frac{[b]_{q}!}{[m]_{q}![b-m]_{q}!}\cdot\binom{N-1}{m}_{q}^{-1}-\frac{[b-1]_{q}!}{[m]_{q}![b-m-1]_{q}!}\cdot\binom{N-1}{m}_{q}^{-1} \Bigg)\\
    &=\frac{q^m}{[m]_{q}}C_{N}^{q}(m,b)\Big(1-\frac{[b-m]_{q}}{[b]_{q}} \Big)\\
    &=C_{N}^{q}(m,b)\frac{q^b}{[b]_{q}}.
  \end{alignat}
 Note that the equality holds if $m\geq b$, since both sides are $0$. Second, it is sufficient to prove 
  $$\Big(C_{N}^{q}(a-1,n-1)-C_{N}^{q}(a,n-1) \Big)\frac{1}{[N-n]_{q}}=C_{N}^{q}(a,n)\frac{q^{n-a}}{[n]_{q}} .$$
  If $a<n-1$, it is easily seen that
  \begin{alignat}{4}
  &\Big(C_{N}^{q}(a-1,n-1)-C_{N}^{q}(a,n-1) \Big)\frac{1}{[N-n]_{q}}\\
  &=\Bigg(\frac{[n-1]_{q}!}{[a-1]_{q}![n-a]_{q}!}\cdot\frac{[a-1]_{q}![N-a]_{q}!}{[N-1]_{q}!}-\frac{[n-1]_{q}!}{[a]_{q}![n-a-1]_{q}!}\cdot\frac{[a]_{q}![N-a-1]_{q}!}{[N-1]_{q}!} \Bigg)\frac{1}{[N-n]_{q}}\\
  &=C_{N}^{q}(a,n)\Bigg(\frac{[a]_{q}}{[n]_{q}}\cdot\frac{[N-a]_{q}}{[a]_{q}}-\frac{[n-a]_{q}}{[n]_{q}} \Bigg)\frac{1}{[N-n]_{q}}\\
  &=C_{N}^{q}(a,n)\frac{q^{n-a}}{[n]_{q}}.
  \end{alignat}
  Note that the equality holds if $a\geq n-1$.
\end{proof}

Let $(J,J^\prime)$ be a consecutive pair of intervals, and $\bm{m}=(m_{j})_{j\in J},\ \bm{n}=(n_{j})_{j\in J^\prime}$ be tuples of non-negative integers. We define $\widetilde{J}=J\cup J^\prime$ and 
$$D_{N}^{q}(\bm{m},\bm{n})=\det\Big(C_{N}^{q}(\widetilde{m}_{j_{1}},\widetilde{n}_{j_{2}}) \Big)_{j_{1},j_{2}\in\widetilde{J}}\ ,$$
where $\widetilde{m}_{j}$ (resp. $\widetilde{n}_{j}$) means the same definition as \Cref{Yam:def}. We also define $[\bm{m}]_{q}=([m_{j}]_{q})_{j\in J}$.
\begin{lem}
  Let $(J,J^\prime)$ be a consecutive pair of intervals.\\
  $\mathrm{(1)}$ If $\bm{m}\in[1,N-1]^{J}$ and $\bm{n}\in[0,N-1]^{J^\prime}$ are non-decreasing, we have
  \begin{align}
  \prod\Bigg(\frac{q^{\bm{m}}}{\bm{[m]}_{q}}\Bigg)D_{N}^{q}(\bm{m},\bm{n})=\sum_{\substack{\bm{b}\in[1,N-1]^{J}\\ \bm{b}\inleq \bm{n}}}D_{N}^{q}(\bm{m},\bm{b})\prod\Bigg(\frac{q^{\bm{b}}}{\bm{[b]}_{q}}\Bigg). \label{lem1}
  \end{align}
  $\mathrm{(2)}$ If $\bm{m}\in[0,N-1]^{J}$ and $\bm{n}\in[1,N-1]^{J^\prime}$ are non-decreasing, we have
  \begin{align}
  \label{lem2}
    \sum_{\substack{\bm{a}\in[1,N-1]^{J^\prime}\\ \bm{m}\inle\bm{a}}}D_{N}^{q}(\bm{a},\bm{n})\prod\Bigg(\frac{q^{\bm{n}-\bm{a}}}{[\bm{n}]_{q}}\Bigg)=D_{N}^{q}(\bm{m},\bm{n-1})\frac{1}{\prod([\bm{N-n}]_{q})}.
\end{align}
\end{lem}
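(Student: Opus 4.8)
Both identities are determinant-valued lifts of the single-variable $q$-identities \eqref{3.5(1)} and \eqref{3.5(2)} of \Cref{my lem 2}, and the plan is to follow Yamamoto's proof of \Cref{Lem:Yam3.11} closely, feeding in \Cref{my lem 2} in place of \Cref{Lem:Yam3.8}. Apart from \Cref{my lem 2} and the boundary values $C^q_N(0,n)=1$, $C^q_N(m,N-1)=1$ and $C^q_N(m,n)=0$ for $m>n$, the only tools are multilinearity of the determinant and elementary row and column operations. Throughout I write $J=[p,q]$, so that $\widetilde{J}=[p^{\star},q]$ with $p^{\star}\in\{p-1,p\}$, and I note that the hypotheses that $\bm{n}$ and $\bm{m}$ be non-decreasing are exactly what keeps $(\widetilde{n}_j)_j$ non-decreasing and the summation ranges produced below within the hypotheses $n\le n'$, $m\le m'$ of \Cref{my lem 2}.

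For \eqref{lem1} I would start from $D^q_N(\bm{m},\bm{n})$ and, for $j_2=q,q-1,\dots,p^{\star}+1$, replace its $j_2$-th column by the difference of the $j_2$-th and the $(j_2-1)$-th columns; this does not change the determinant. If $p^{\star}=p-1$, then $\widetilde{m}_{p^{\star}}=0$, so by $C^q_N(0,n)=1$ the top row has become $(1,0,\dots,0)$ and a cofactor expansion along it reduces to a determinant indexed by $J$ (if $p^{\star}=p$ the matrix is already indexed by $J$). Multiplying each surviving row $j_1$ by $q^{m_{j_1}}/[m_{j_1}]_q$ and applying \eqref{3.5(1)} entrywise (legitimate since $m_{j_1}\ge1$) should turn the left side of \eqref{lem1} into $\det(A_{j_1,j_2})_{j_1,j_2\in J}$, where $A_{j_1,j_2}=\sum_{b=\beta_{j_2-1}+1}^{\beta_{j_2}}C^q_N(m_{j_1},b)\,q^{b}/[b]_q$, $\beta_j=\widetilde{n}_j$ for $j\in J$, and $\beta_{p-1}=\widetilde{n}_{p-1}$ if $p-1\in J'$ and $\beta_{p-1}=0$ otherwise. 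Expanding by multilinearity in the columns yields $\sum_{\bm{b}}D^q_N(\bm{m},\bm{b})\prod_{j\in J}q^{b_j}/[b_j]_q$ over all $\bm{b}=(b_j)_{j\in J}$ with $\beta_{j-1}<b_j\le\beta_j$, and comparing this range with the definitions of $[1,N-1]^J$ and of $\inleq$ for the consecutive pair $(J,J')$ should show it coincides with $\{\bm{b}\in[1,N-1]^J:\bm{b}\inleq\bm{n}\}$, which is the right side of \eqref{lem1}.

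For \eqref{lem2} I would set $\widehat{m}_j=m_{j+1}$ if $j+1\in J$ and $\widehat{m}_j=N-1$ otherwise, for $j\in J'$. The key observation is that the conditions $\bm{a}\in[1,N-1]^{J'}$ and $\bm{m}\inle\bm{a}$ together unpack into the \emph{independent} box conditions $\widetilde{m}_j<a_j\le\widehat{m}_j$ for $j\in J'$ (with $\widetilde{m}_j\le\widehat{m}_j$ forced by $\bm{m}$ being non-decreasing). Since the $j_1$-th row of $D^q_N(\bm{a},\bm{n})=\det(C^q_N(a_{j_1},n_{j_2}))_{j_1,j_2\in J'}$ depends only on $a_{j_1}$, multilinearity in the rows (with the column scalars $q^{n_{j_2}}/[n_{j_2}]_q$ absorbed afterwards) should rewrite the left side of \eqref{lem2} as $\det(B_{j_1,j_2})_{j_1,j_2\in J'}$ with $B_{j_1,j_2}=\sum_{a=\widetilde{m}_{j_1}+1}^{\widehat{m}_{j_1}}C^q_N(a,n_{j_2})\,q^{n_{j_2}-a}/[n_{j_2}]_q$. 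Applying \eqref{3.5(2)} to each $B_{j_1,j_2}$ and pulling $[N-n_{j_2}]_q^{-1}$ out of column $j_2$ should then give $\big(\prod_{j\in J'}[N-n_j]_q^{-1}\big)\det\!\big(C^q_N(\widetilde{m}_{j_1},n_{j_2}-1)-C^q_N(\widehat{m}_{j_1},n_{j_2}-1)\big)_{j_1,j_2\in J'}$, so it remains to identify the last determinant with $D^q_N(\bm{m},\bm{n-1})$. For that I would, in $D^q_N(\bm{m},\bm{n-1})$, replace the $j_1$-th row by the difference of the $j_1$-th and the $(j_1+1)$-th rows for $j_1=p^{\star},\dots,q-1$; when $q\notin J'$ the rightmost column, formerly $(1,\dots,1)^{\top}$, becomes $(0,\dots,0,1)^{\top}$ and a cofactor expansion reduces to a determinant indexed by $J'$ (if $q\in J'$ it is already so indexed). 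Since $\widehat{m}_{j_1}=\widetilde{m}_{j_1+1}$ for every surviving row (the sole exception $j_1=q$ arises only when $q\in J'$, where $C^q_N(\widehat{m}_q,n_{j_2}-1)=C^q_N(N-1,n_{j_2}-1)=0$ and so the bottom row is already $(C^q_N(m_q,n_{j_2}-1))_{j_2}$), the two determinants should agree, which proves \eqref{lem2}.

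The computational content is entirely in \Cref{my lem 2}; what is left is bookkeeping over the four shapes of a consecutive pair $(J,J')$ together with the fictitious entries $\widetilde{m}_{p-1}=0$ and $\widetilde{n}_q=N-1$. The step I expect to be the main obstacle is verifying, case by case, that the telescoped range $\{\beta_{j-1}<b_j\le\beta_j\}$ in \eqref{lem1} really equals $\{\bm{b}\in[1,N-1]^J:\bm{b}\inleq\bm{n}\}$, and, symmetrically, that the box conditions $\widetilde{m}_j<a_j\le\widehat{m}_j$ in \eqref{lem2} match $\bm{m}\inle\bm{a}$; once these matchings are checked in each case, multilinearity together with \Cref{my lem 2} closes both statements.
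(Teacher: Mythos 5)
Your proposal is correct and follows exactly the route the paper takes: the paper's proof is the one-line remark that the lemma follows from \Cref{my lem 2} ``by the same method as Lemma 3.11 of \cite{MSWyamamoto}'', and your argument is a faithful (and more detailed) reconstruction of that method, with the $q$-identities \eqref{3.5(1)}, \eqref{3.5(2)} substituted for their classical counterparts. The column/row differencing, cofactor expansions at the fictitious entries $\widetilde{m}_{p-1}=0$ and $\widetilde{n}_q=N-1$, and the matching of the telescoped and box ranges with $\inleq$ and $\inle$ all check out.
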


\begin{proof}
  By using \Cref{my lem 2}, it holds by the same method as Lemma 3.11 of \cite{MSWyamamoto}.
\end{proof}

For a diagonally constant index $\bm{k}$ and $a\in\{p_{0}-1,\ldots,p_{1}\}$, we define a connected sum
\begin{alignat}{2}
  Z^{q}(\bm{k};a)=\sum_{\substack{\bm{m}_{p}(p_{0}\leq p\leq a)\\ \bm{n}_{p}^{(l)}(a<p\leq p_{1})}}\Bigg(\prod_{p=p_{0}}^{a}\frac{\prod(q^{(k_{p}-1)\bm{m}_{p}})}{\prod([\bm{m}_{p}]_{q})^{k_{p}}}&\Bigg)\cdot D^{q}_{N}(\bm{m}_{a},\bm{n}_{a+1}^{(1)}\bm{-1})\\
&\times\Bigg(\prod_{p=a+1}^{p_{1}}\frac{\prod^\prime(q^{\bm{n}_{p}^{(1)}})\cdots\prod^\prime(q^{\bm{n}_{p}^{(k_{p})}})}{\prod([\bm{N}-\bm{n}_{p}^{(1)}]_{q})\prod([\bm{n}_{p}^{(2)}]_{q})\cdots\prod([\bm{n}_{p}^{(k_{p})}]_{q})}\Bigg),
  \end{alignat}
  where
          $$\sideset{}{'}{\prod}([\bm{m}]_{q})=\sideset{}{'}\prod(([m_{j}]_{q})_{j\in[j_{0},j_{1}]})=\prod_{j=j_{0}+1}^{j_{1}}[m_{j}]_{q}.$$
 Here $\bm{m}_{p}\in[1,N-1]^{J_{p}}(p_{0}\leq p\leq a)$ run satisfying $\bm{m}_{p_{0}}\inle\cdots\inle\bm{m}_{a}$, and $\bm{n}_{p}^{(l)}\in[1,N-1]^{J_{p}}(a<p\leq p_{1},\ 1\leq l\leq k_{p})$ run satisfying $\bm{n}_{p}^{(l)}\inleq\bm{n}_{p}^{(l+1)}(1\leq l<k_{p})$ and $\bm{n}_{p}^{(k_{p})}\inle\bm{n}_{p+1}^{(1)}$. By definition, boundary conditions are
 $$Z^{q}(\bm{k};p_{1})=\zeta_{<N}^{BZ}(\bm{k}),\quad Z^{q}(\bm{k};p_{0}-1)=\zeta_{<N}^{q\flat}(\bm{k}).$$
 Then we get the following:

 \begin{lem}[transport relation]\label{Lem:3.7}
   For any diagonally constant index $\bm{k}$ and $a\in\{p_{0},\ldots,p_{1}\}$, we have
   $$Z^{q}(\bm{k};a)=Z^{q}(\bm{k};a-1).$$
 \end{lem}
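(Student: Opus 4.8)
The plan is to run the connected-sum argument exactly as in the proof of Yamamoto's Lemma~3.8--3.11 and the ensuing transport relation $Z(\bm{k};a)=Z(\bm{k};a-1)$, but with every connector $C_N$ replaced by $C_N^q$, every $1/m$ replaced by $q^m/[m]_q$, and every $1/(N-n)$ replaced by $1/[N-n]_q$, tracking the extra $q$-powers that the $q$-analogues in \Cref{my lem 2} produce. Concretely, starting from $Z^q(\bm{k};a)$, I would isolate the block of variables $\bm{m}_a$ and $\bm{n}_{a+1}^{(1)},\ldots,\bm{n}_{a+1}^{(k_{a+1})}$ together with the determinant $D_N^q(\bm{m}_a,\bm{n}_{a+1}^{(1)}\bm{-1})$ sitting between them; everything else is inert for this step.

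First I would apply \eqref{lem1} a total of $k_a-1$ times to ``push'' the factor $\prod(q^{(k_a-1)\bm{m}_a})/\prod([\bm{m}_a]_q)^{k_a}$ through the determinant, each application converting one copy of $\prod(q^{\bm{m}_a}/[\bm{m}_a]_q)$ into a sum over a new tuple with a factor $\prod(q^{\bm{b}}/[\bm{b}]_q)$ and a shifted determinant; after these $k_a-1$ steps the variable $\bm{m}_a$ has been replaced by a chain $\bm{n}_a^{(1)}\inleq\cdots\inleq\bm{n}_a^{(k_a)}$ of new tuples carrying the weights $\prod'(q^{\bm{n}_a^{(l)}})/\prod([\bm{n}_a^{(l)}]_q)$ for $l\ge 2$ (and the first one still paired with the determinant). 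Then I would apply \eqref{lem2} once to the determinant $D_N^q(\bm{n}_a^{(k_a)},\bm{n}_{a+1}^{(1)}\bm{-1})$: the sum over $\bm{a}=\bm{n}_{a+1}^{(1)}$ with $\bm{n}_a^{(k_a)}\inle\bm{n}_{a+1}^{(1)}$ collapses, producing the factor $1/\prod([\bm{N}-\bm{n}_a^{(1)}]_q)$ together with a $q^{\bm{n}-\bm{a}}$-weight, and yielding the new determinant $D_N^q(\bm{n}_a^{(k_a)},\bm{n}_{a+1}^{(1)}\bm{-1})$ reindexed as $D_N^q(\bm{m}_{a-1}\text{-block},\bm{n}_a^{(1)}\bm{-1})$. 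Matching the resulting $q$-powers: \eqref{3.5(1)} supplies exactly the $q^{\bm{n}_a^{(l)}}$ in the numerators of $\prod'$, and \eqref{3.5(2)} supplies the $q^{n-a}$ that becomes the remaining numerator weight in the $p=a$ term of $Z^q(\bm{k};a-1)$; one checks that the bookkeeping closes, i.e. the index conditions $\bm{n}_a^{(l)}\inleq\bm{n}_a^{(l+1)}$ and $\bm{n}_{a-1}\text{-chain}\inle\bm{n}_a^{(1)}$ reproduce precisely the summation range defining $Z^q(\bm{k};a-1)$.

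I would present this as: ``By \eqref{lem1} (applied $k_a-1$ times) and \eqref{lem2}, together with the definition of $Z^q(\bm{k};a)$, the claim follows by the same computation as in the proof of Yamamoto's transport relation \cite{MSWyamamoto},'' spelling out only the single diagonal block where the two lemmas are invoked and the matching of the $q^{(k_a-1)\bm{m}_a}$, $\prod'(q^{\bm{n}})$, and $q^{n-a}$ exponents, since the rest is formally identical to the $q=1$ case.

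The main obstacle I anticipate is the exponent bookkeeping: at $q=1$ the factors $\prod(1/\bm{m})$ are symmetric and can be shuffled freely, but the $q$-weights $\prod'(q^{\bm{m}})$ are the non-symmetric ``omit the first coordinate'' products $\prod_{j=j_0+1}^{j_1}$, so I must verify that \eqref{3.5(1)}--\eqref{3.5(2)}, whose $q$-powers $q^b$ and $q^{n-a}$ are attached coordinatewise, assemble into exactly these primed products after taking the determinant $D_N^q$ — i.e. that the ``missing'' first-coordinate exponent is precisely absorbed by the $C_N^q(0,\cdot)=1$ and $C_N^q(\cdot,N-1)=1$ boundary entries of $\widetilde{m},\widetilde{n}$. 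This is the one place where the $q$-deformation is not a mechanical substitution, and it is what I would check carefully before deferring the rest to \cite{MSWyamamoto}.
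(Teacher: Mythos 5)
Your proposal is essentially the paper's own proof, which consists of exactly this one step: apply \eqref{lem1} and then \eqref{lem2} to the block at diagonal $a$, everything else being inert. Two bookkeeping corrections so that it matches the paper (and its worked example, Case 1): \eqref{lem1} is applied $k_a$ times, not $k_a-1$ --- the last application, in the rearranged form $\frac{1}{\prod([\bm{m}]_q)}D_N^q(\bm{m},\bm{n})=\sum_{\bm{b}\inleq\bm{n}}D_N^q(\bm{m},\bm{b})\prod\bigl(q^{\bm{b}-\bm{m}}/[\bm{b}]_q\bigr)$, is what creates $\bm{n}_a^{(1)}$ together with the $q^{-\bm{m}_a}$ weight that \eqref{lem2} then consumes --- and in the \eqref{lem2} step the sum that collapses is over $\bm{m}_a$ (the first argument of the determinant, playing the role of $\bm{a}$ and constrained by $\bm{m}_{a-1}\inle\bm{m}_a$), not over $\bm{n}_{a+1}^{(1)}$.
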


 \begin{proof}
   By using \eqref{lem1} $k_{a}$ times and \eqref{lem2}, the proof completes.
 \end{proof}

\begin{proof}[Proof of \Cref{main thm 2}]
  By \Cref{Lem:3.7}, we have
  $$\zeta_{<N}^{BZ}(\bm{k})=Z^{q}(\bm{k};p_{1})=\cdots=Z^{q}(\bm{k};p_{1}-1)=\cdots=Z^{q}(\bm{k};p_{0}-1)=\zeta_{<N}^{q\flat}(\bm{k}).$$
Thus we obtain the main result.
\end{proof}

 \subsection{Some examples}
 We give some examples about \Cref{main thm 2}.

 \noindent
\textbf{Case 1} : $\bm{k}=\begin{ytableau}
1 \\
2  
\end{ytableau}$. It is the same case as MSW formula. Each $J$ is $J_{2}=\emptyset,\ J_{1}=\{1\},\ J_{0}=\{1\},\ J_{-1}=\emptyset$. Now we start with
$$\zeta^{BZ}_{<N}(\bm{k})=Z^{q}(\bm{k};1)=\sum_{0<m_{0,1}<m_{1,2}<N}\frac{q^{m_{1,2}}}{[m_{0,1}]_{q}[m_{1,2}]_{q}^{2}}D_{N}^{q}(m_{1,2},N-1).$$
Note that $D_{N}^{q}(m_{1,2},N-1)=C_{N}^{q}(m_{1,2},N-1)$. By using \eqref{3.5(1)}, we obtain
$$\frac{q^{m_{1,2}}}{[m_{1,2}]_{q}}C_{N}^{q}(m_{1,2},N-1)=\sum_{0<b\leq N-1}C_{N}^{q}(m_{1,2},b)\frac{q^b}{[b]_{q}}.$$
Then we have
$$Z^{q}(\bm{k};1)=\sum_{\substack{0<m_{0,1}<m_{1,2}<N\\ 0<n<N}}\frac{1}{[m_{0,1}]_{q}[m_{1,2}]_{q}}C_{N}^{q}(m_{1,2},n)\frac{q^n}{[n]_{q}}.$$
By taking the same step, we get
$$\text{R.H.S.}=\sum_{\substack{0<m_{0,1}<m_{1,2}<N\\ 0<n_{1}\leq n_{2}<N}}\frac{1}{[m_{0,1}]_{q}}C_{N}^{q}(m_{1,2},n_{1})\frac{q^{n_{1}-m_{0,1}+n_{2}}}{[n_{1}]_{q}[n_{2}]_{q}}.$$
Next, we apply \eqref{3.5(2)} to obtain
$$\sum_{m_{0,1}<m_{1,2}\leq N-1}C_{N}^{q}(m_{1,2},n_{1})\frac{q^{n_{1}-m_{0,1}}}{[n_{1}]_{q}}=C_{N}^{q}(m_{0,1},n_{1}-1)\frac{1}{[N-n_{1}]_{q}},$$
then we get
$$\text{R.H.S.}=\sum_{\substack{0<m_{0,1}<N\\ 0<n_{1}\leq n_{2}<N}}\frac{1}{[m_{0,1}]_{q}}C_{N}^{q}(m_{0,1},n_{1}-1)\frac{q^{n_{2}}}{[N-n_{1}]_{q}[n_{2}]_{q}}=Z^{q}(\bm{k};0) .$$
We apply the same calculation, and finally, we obtain
$$\zeta^{BZ}_{<N}(\bm{k})=Z^{q}(\bm{k};1)=\cdots=Z^{q}(\bm{k};0-1)=\sum_{0<n_{1}<n_{2}\leq n_{3}<N}\frac{q^{n_{3}}}{[N-N_{1}]_{q}[N-n_{2}]_{q}[n_{3}]_{q}}=\zeta^{q\flat}_{<N}(\bm{k}).$$

\noindent
  \textbf{Case 2} : $\bm{k}=\begin{ytableau}
1 & 2  
\end{ytableau}$. Each $J$ is $J_{1}=\emptyset,\ J_{0}=\{1\},\ J_{-1}=\{-2\},\ J_{-2}=\emptyset$. Now we start with 
$$\zeta_{<N}^{BZ,\star}(\bm{k})=Z^{q}(\bm{k};0)=\sum_{0<m_{0,1}\leq m_{-1,2}<N}\frac{q^{m_{-1,2}}}{[m_{0,1}]_{q}[m_{-1,2}]_{q}^{2}}D_{N}^{q}(m_{0,1},N-1) .$$
Note that $D_{N}^{q}(m_{0,1},N-1)=C_{N}^{q}(m_{0,1},N-1)$. First, we apply \eqref{3.5(1)} to obtain
$$\frac{1}{[m_{0,1}]_{q}}C_{N}^{q}(m_{0,1},N-1)=\sum_{0<b\leq N-1}C_{N}^{q}(m_{0,1},b)\frac{q^{b-m_{0,1}}}{[b]_{q}},$$
then we have
$$Z^{q}(\bm{k};0)=\sum_{\substack{0<m_{0,1}\leq m_{-1,2}<N\\ 0<n<N}}\frac{q^{m_{-1,2}}}{[m_{-1,2}]_{q}^2}C_{N}^{q}(m_{0,1},n)\frac{q^{n-m_{0,1}}}{[n]_{q}}.$$
Next, we apply \eqref{3.5(2)} to obtain
  $$\sum_{0<m_{0,1}\leq m_{-1,2}}C_{N}^{q}(m_{0,1},n)\frac{q^{n-m_{0,1}}}{[n]_{q}}=\Big(1-C_{N}^{q}(m_{-1,2},n-1)\Big)\frac{1}{[N-n]_{q}},$$
then we have
  $$\text{R.H.S.}=\sum_{\substack{0<m_{-1,2}<N\\ 0<n<N}}\frac{q^{m_{-1,2}}}{[m_{-1,2}]_{q}^{2}}D_{N}^{q}(m_{-1,2},n-1)\frac{1}{[N-n]_{q}}=Z^{q}(\bm{k};-1).$$
  Here $D_{N}^{q}(m_{-1,2},n-1)=1-C_{N}^{q}(m_{-1,2},n-1)=C_{N}^{q}(m_{-1,2},N-1)-C_{N}^{q}(m_{-1,2},n-1)$. 
  We apply the same calculation, and finally, we obtain
  $$\zeta^{BZ,\star}_{<N}(\bm{k})=Z^{q}(\bm{k};0)=\cdots=Z^{q}(\bm{k};-1-1)=\sum_{\substack{n_{1}\leq n_{2}\geq n_{3}\\ {}^{\forall}n_{j}\in[1,N-1]}}\frac{q^{n_{2}}}{[N-n_{1}]_{q}[n_{2}]_{q}[N-n_{3}]_{q}}=\zeta_{<N}^{q\star\flat}(\bm{k}).$$
For example, if $N=3$ then
\begin{alignat}{2}
&\zeta_{<3}^{BZ,\star}(\bm{k})=q+\frac{q^2}{[2]_{q}^{2}}+\frac{q^{2}}{[2]_{q}^{3}},\\
&\zeta_{<3}^{q\flat}(\bm{k})=\frac{q}{[2]_{q}^{2}}+\frac{q^2}{[2]_{q}^{3}}+\frac{q^2}{[2]_{q}^{2}}+\frac{q^2}{[2]_{q}^{2}}+\frac{q^2}{[2]_{q}}.
\end{alignat}
Taking the difference, we obtain
$$\zeta_{<3}^{BZ,\star}(\bm{k})-\zeta_{<3}^{q\flat}(\bm{k})=q-\frac{q}{[2]_{q}^{2}}-\frac{q^2}{[2]_{q}^{2}}-\frac{q^2}{[2]_{q}}=q-(1+q)\frac{q}{1+q}=0.$$
\begin{rem}
   If $\bm{k}=\begin{ytableau}
k_{1} & \cdots & k_{r} 
\end{ytableau}\ (k_{1},\ldots,k_{r}\geq1)$, then we have
$$\zeta_{<N}^{BZ,\star}(\bm{k})=\zeta_{<N}^{q\star\flat}(\bm{k}),$$
where 
\begin{align}
  \zeta_{<N}^{BZ,\star}(\bm{k})&=\sum_{0<m_{1}\leq\cdots\leq m_{r}<N}\dfrac{q^{(k_{1}-1)m_{1}+\cdots+(k_{r}-1)m_{r}}}{[m_{1}]_{q}^{k_{1}}\cdots[m_{r}]_{q}
  ^{k_{r}}},\\
   \zeta_{<N}^{q\star\flat}(\bm{k})&= \sum_{\substack{0<n_{j1}\leq\cdots\leq n_{jk_{j}}<N(1\leq j\leq r)\\ n_{(j-1)1}\leq n_{jk_{j}}(1<j\leq r)}}\prod_{j=1}^r\frac{q^{n_{j2}+\cdots+n_{jk_{j}}}}{[N-n_{j1}]_{q}[n_{j2}]_{q}\cdots [n_{jk_{j}}]_{q}}.
\end{align} 
\end{rem}

\noindent
\textbf{Case 3} : $\bm{k}=\begin{ytableau}
 \none &  1\\
1 & 1
\end{ytableau}$. Each $J$ is $J_{2}=\emptyset,\ J_{1}=\{1\},\ J_{0}=\{2\},\ J_{-1}=\{2\},\ J_{-2}=\emptyset$. Now we start with
$$\zeta^{BZ}_{<N}(\bm{k})=Z^{q}(\bm{k};1)=\sum_{\substack{\hspace{6truemm} m_{-1,2}\\ \hspace{6truemm} \rotatebox{90}{>}\\ \hspace{-3truemm}m_{1,1}\hspace{1truemm}\leq \hspace{1truemm}m_{0,2}}}\frac{1}{[m_{-1,2}]_{q}[m_{0,2}]_{q}[m_{1,1}]_{q}}D_{N}^{q}(m_{1,1},N-1).$$
Note that $D_{N}^{q}(m_{1,1},N-1)=C_{N}^{q}(m_{1,1},N-1).$ First, we apply \eqref{3.5(1)} to obtain
$$\frac{1}{[m_{1,1}]_{q}}C_{N}^{q}(m_{1,1},N-1)=\sum_{0<b\leq N-1}C_{N}^{q}(m_{1,1},b)\frac{q^{b-m_{1,1}}}{[b]_{q}},$$
then we have
$$Z^{q}(\bm{k};1)=\sum_{\substack{m_{-1,2}<m_{0,2}\geq m_{1,1} \\ {}^{\forall}m_{i,j}\in[1,N-1]\\  0<n\leq N-1}}\frac{1}{[m_{-1,2}]_{q}[m_{0,2}]_{q}}C_{N}^{q}(m_{1,1},n)\frac{q^{n-m_{1,1}}}{[n]_{q}}.$$
Next, we apply \eqref{3.5(2)} to obtain
$$\sum_{0<m_{1,1}\leq m_{0,2}}C_{N}^{q}(m_{1,1},n)\frac{q^{n-m_{1,1}}}{[n]_{q}}=\Big(1-C_{N}^{q}(m_{0,2},n-1) \Big)\frac{1}{[N-n]_{q}},$$
then we have
$$\text{R.H.S.}=\sum_{\substack{0<m_{-1,2}<m_{0,2}<N\\ 0<n<N}}\frac{1}{[m_{-1,2}]_{q}[m_{0,2}]_{q}}D_{N}^{q}(m_{0,2},n-1)\frac{1}{[N-n]_{q}}=Z^{q}(\bm{k};0) .$$
Note that $D_{N}^{q}(m_{0,2},n-1)=1-C_{N}^{q}(m_{0,2},n-1)=C_{N}^{q}(m_{0,2},N-1)-C_{N}^{q}(m_{0,2},n-1)$. We apply the same calculation, and finally, we obtain
$$\zeta_{<N}^{BZ}(\bm{k})=Z^{q}(\bm{k};1)=\cdots=Z^q(\bm{k};0)=\cdots=Z^{q\flat}(\bm{k};-1-1)=\zeta_{<N}^{q\flat}(\bm{k}).$$

\section*{Acknowledgments}
The author expresses sincere thanks to Professor Yasuo Ohno for carefully reading and providing helpful comments. Additionally, the author would like to express gratitude to Professor Shin-ichiro Seki for his fruitful guidance. Also, appreciation is extended to Professor Shuji Yamamoto for valuable advice. Finally, the author wishes to thank his colleagues for their various comments and remarks.

\bibliographystyle{palin}
\bibliography{BibTeX}

\end{document}